\title[Mutation classes as derived equivalence classes]
{Mutation classes of certain quivers with potentials as derived
equivalence classes}
\author{Sefi Ladkani}
\address{%
Institut des Hautes \'{E}tudes Scientifiques \\
Le Bois Marie, 35, route de Chartres \\
91440 Bures-sur-Yvette, France}
\email{sefil@ihes.fr}
\urladdr{http://www.ihes.fr/\~{}sefil}
\thanks{This work was supported by a European Postdoctoral Institute (EPDI)
fellowship.}
\DeclareMathOperator{\End}{End}
\newcommand{\cD}{\mathcal{D}}
\newcommand{\eps}{\varepsilon}
\newcommand{\gL}{\Lambda}
\newcommand{\bN}{\mathbb{N}}
\newcommand{\cP}{\mathcal{P}}
\newcommand{\cQ}{\mathcal{Q}}
\newcommand{\cT}{\mathcal{T}}
\newcommand{\bX}{\mathbb{X}}
\theoremstyle{plain}
\newtheorem{theorem}{Theorem}
\newtheorem{prop}{Proposition}[section]
\newtheorem{lemma}[prop]{Lemma}
\newtheorem{cor}[prop]{Corollary}
\theoremstyle{definition}
\newtheorem{remark}[prop]{Remark}
\numberwithin{equation}{section}
\begin{document}

\begin{abstract}
We characterize the marked bordered unpunctured oriented surfaces with
the property that all the Jacobian algebras of the quivers with
potentials arising from their triangulations are derived equivalent.
These are either surfaces of genus $g$ with $b$ boundary components and
one marked point on each component, or the disc with $4$ or $5$ points
on its boundary.

We show that for each such marked surface, all the quivers in the
mutation class have the same number of arrows, and the corresponding
Jacobian algebras constitute a complete derived equivalence class of
finite-dimensional algebras whose members are connected by sequences of
Brenner-Butler tilts. In addition, we provide explicit quivers for each
of these classes.

We consider also $10$ of the $11$ exceptional finite mutation classes
of quivers not arising from triangulations of marked surfaces excluding
the one of the quiver $X_7$, and show that all the finite-dimensional
Jacobian algebras in such class (for suitable choice of potentials) are
derived equivalent only for the classes of the quivers $E_6^{(1,1)}$
and $X_6$.
\end{abstract}

\maketitle

\section{Motivation and Summary of Results}

The Bernstein-Gelfand-Ponomarev reflection~\cite{BGP73} is an operation
on quivers which carries several interpretations. On a combinatorial
level, it takes as inputs an acyclic quiver $Q$ and a vertex $s$ of $Q$
which is a source or a sink in $Q$ and outputs a new quiver $\sigma_s
Q$. On an algebraic level, it gives rise to a derived equivalence
between the path algebras $KQ$ and $K \sigma_sQ$ over any field $K$.
Moreover, when $K$ is algebraically closed, by a result of
Happel~\cite{Happel88}, the path algebras of two acyclic quivers $Q$
and $Q'$ are derived equivalent if and only if $Q'$ can be obtained
from $Q$ by performing a finite number of reflections. It is therefore
plausible to extend the scope of this operation beyond path algebras of
quivers as well as remove the restriction on the vertex to be a sink or
a source.

Indeed, a generalization of the combinatorial aspect of reflection is
given by the quiver mutation introduced by Fomin and
Zelevinsky~\cite{FominZelevinsky02} in their theory of cluster
algebras, allowing to mutate a quiver (without loops and 2-cycles) at
any vertex. Furthermore, Derksen, Weyman and Zelevinsky~\cite{DWZ08}
have developed the theory of quivers with potentials (QP) and their
mutations. The data of a quiver $Q$ and a potential $W$ on it give
rise to the Jacobian algebra $\cP(Q,W)$ which can be seen as a
generalization of the path algebra in the acyclic case.

However, in such generality the algebraic aspect of reflection as a
derived equivalence is usually lost, that is, if $\mu_k(Q,W)$ is the
mutation of $(Q,W)$ at a vertex $k$, the Jacobian algebras $\cP(Q,W)$
and $\cP(\mu_k(Q,W))$ will not be derived equivalent in general.
One remedy to this situation, provided by Keller and
Yang~\cite{KellerYang11}, is to replace the Jacobian algebra by the
Ginzburg dg-algebra $\Gamma(Q,W)$ which is negatively graded and
3-Calabi-Yau, and then the derived categories
of $\Gamma(Q,W)$ and $\Gamma(\mu_k(Q,W))$ are always equivalent.

Another approach is not to replace the Jacobian algebras, but rather
restrict attention to mutation classes of QPs possessing desired
properties regarding derived equivalence. Following this approach, in
this paper we will present mutation classes $\cQ$ of connected quivers
with potentials having the following two properties:
\begin{enumerate}
\item [$(\delta_1)$]
For any $(Q,W) \in \cQ$, the Jacobian algebra $\cP(Q,W)$ is
finite-dimensional;

\item [$(\delta_2)$]
For any $(Q,W), (Q',W') \in \cQ$, the Jacobian algebras
$\cP(Q,W)$ and $\cP(Q',W')$ are derived equivalent.
\end{enumerate}
Note that by~\cite{DWZ08}, it is enough to check condition $(\delta_1)$
for just one member of $\cQ$.

The condition $(\delta_2)$ is quite restrictive. For example, it is
easy to see that while a mutation class of an acyclic connected quiver
$Q$ (necessarily with zero potential) always satisfies the condition
$(\delta_1)$, it will never satisfy the condition $(\delta_2)$ unless
$Q$ has at most two vertices. Indeed, the Jacobian algebras of the QP
in the mutation class of $Q$ are precisely the cluster-tilted algebras
of type $Q$~\cite{BIRSm08,BMR07}. When $Q$ has at most two vertices,
all mutations are BGP reflections so they are derived equivalences,
whereas when $Q$ has three or more vertices there exist cluster-tilted
algebras which are not hereditary and hence of infinite global
dimension~\cite{KellerReiten07}, thus not derived equivalent to the
path algebra $KQ$.

On the other hand, there are mutation classes satisfying condition
$(\delta_2)$ but not $(\delta_1)$.
Indeed, by~\cite{KellerYang11} the property that the
Ginzburg dg-algebra has its cohomology concentrated only in degree zero
(and hence is quasi-isomorphic to the Jacobian algebra)
is preserved under QP mutation. Mutation classes with this property
thus satisfy $(\delta_2)$, but as their Jacobian algebras are 3-Calabi-Yau,
condition $(\delta_1)$ does not hold.

Since we deal with mutation classes, it is natural to ask when a single
mutation of QP leads to derived equivalence of the corresponding
Jacobian algebras. Possible candidates for tilting complexes, studied
in this context by Vitoria~\cite{Vitoria09} and Keller-Yang~\cite[\S
6]{KellerYang11}, take the following form. For an algebra $\gL$ given
by a quiver with relations and a vertex $k$ without loops, consider the
complexes
\begin{align*}
T^-_k(\gL) = \bigl( P_k \xrightarrow{f} \bigoplus_{j \to k} P_j \bigr) \oplus
\bigl(\bigoplus_{i \neq k} P_i \bigr)
&, &
T^+_k(\gL) = \bigl( \bigoplus_{k \to j} P_j \xrightarrow{g} P_k \bigr) \oplus
\bigl(\bigoplus_{i \neq k} P_i \bigr)
\end{align*}
where $P_i$ denotes the indecomposable projective corresponding to $i$,
the map $f$ (respectively, $g$) is induced by all the arrows ending
(respectively, starting) at $k$, and the terms $P_i$ for $i \neq k$ lie
in degree $0$. Each of these complexes is not always tilting, but when
it is, it induces a derived equivalence $\cD(\gL) \xrightarrow{\sim}
\cD(\End T^-_k(\gL))$ (or $\cD(\gL) \xrightarrow{\sim} \cD(\End
T^+_k(\gL))$) which generalizes the BGP reflection functor and forms a
specific instance of a perverse Morita equivalence~\cite{Rouquier06}.
In this case we denote the endomorphism algebra by $\mu^-_k(\gL)$
(resp.\ $\mu^+_k(\gL)$) and call it the \emph{negative} (resp.\
\emph{positive}) mutation of the algebra $\gL$, see~\cite{Ladkani10}.
Note that it might well happen that none of the algebra mutations is
defined, or that both of them are defined but not isomorphic.

Given a Jacobian algebra $\cP(Q,W)$ and a vertex $k$, there are now two
notions of mutation that we may consider. The first is mutation of
quivers with potentials leading to the Jacobian algebra
$\cP(\mu_k(Q,W))$, whereas the second is given by the negative and
positive algebra mutations of $\cP(Q,W)$. Roughly speaking, the
mutation is \emph{good} if these two notions are compatible. More
precisely, the mutation of $(Q,W)$ at the vertex $k$ is \emph{good} if
$\mu^-_k(\cP(Q,W)) \simeq \cP(\mu_k(Q,W))$ or $\mu^+_k(\cP(Q,W)) \simeq
\cP(\mu_k(Q,W))$.

By definition, a good mutation implies the derived equivalence of the
corresponding Jacobian algebras, known in the physics literature as
Seiberg duality. Hence one is motivated to consider the following
property which implies the condition $(\delta_2)$.
\begin{enumerate}
\item [$(\delta_3)$]
For any $(Q,W) \in \cQ$, the mutation at any vertex $k$ of $Q$ is good.
Furthermore, if both algebra mutations of $\cP(Q,W)$ at $k$ are defined,
they are isomorphic.
\end{enumerate}

It will turn out that all the mutation classes possessing properties
$(\delta_1)$ and $(\delta_2)$ which we will present have also the
stronger property $(\delta_3)$. Note that for these classes an answer
to Question~12.2 in~\cite{DWZ08} can be given in a very explicit way,
namely at any vertex $k$ the (unique) algebra mutation of $\cP(Q,W)$
coincides with $\cP(\mu_k(Q,W))$.

Motivated by algorithmic applications, e.g.\ \cite[\S 5.3]{Ladkani10},
we consider the additional finiteness condition:
\begin{enumerate}
\item [$(\delta_4)$]
$\cQ$ consists of a finite number of quivers.
\end{enumerate}

According to Felikson, Shapiro and Tumarkin~\cite{FST08b}, the
connected quivers whose mutation class is finite are either those
arising from triangulations of bordered oriented surfaces with marked
points as introduced by Fomin, Shapiro and Thurston~\cite{FST08}, or
they are mutation equivalent to one of 11 exceptional quivers, or they
are acyclic with $2$ vertices and $r \geq 3$ arrows between them.

\subsection{Mutation classes from triangulations of bordered surfaces
with marked points} \label{sec:surface}

For quivers arising from such triangulations, potentials have been
defined by Labardini-Fragoso~\cite{Labardini09}. In this paper we
further restrict our attention to the case of \emph{no punctures}, that
is, the marked points lie on the boundary of the surface. The
associated potentials are then sums of oriented $3$-cycles
(\emph{triangles}) and the resulting Jacobian algebras are the gentle
algebras studied by Assem, Br\"{u}stle, Charbonneau-Jodoin and
Plamondon~\cite{ABCP10}.

We start by characterizing the configurations of marked points yielding
mutations classes satisfying the property $(\delta_2)$.

\begin{theorem} \label{t:SM}
Let $S$ be a surface with boundary $\partial S$ and $M \subset \partial
S$ a finite set of marked points with at least one point in any
connected component of $\partial S$.

Then the mutation class of quivers with potentials corresponding to the
triangulations of $(S,M)$ satisfies condition $(\delta_2)$ if and only
if either:
\begin{itemize}
\item
$M$ contains exactly one point from each connected component of
$\partial S$, or

\item
$S$ is a disc and $M$ consists of $4$ or $5$ points.
\end{itemize}
\end{theorem}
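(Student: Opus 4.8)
The plan is to use the fact, due to Assem--Br\"{u}stle--Charbonneau-Jodoin--Plamondon~\cite{ABCP10}, that the Jacobian algebras in question are gentle, together with the standard dictionary in which a flip of the triangulation corresponds to a single quiver mutation. Since all triangulations of $(S,M)$ are connected by flips, condition $(\delta_2)$ asks precisely that flips preserve the derived equivalence class of the associated gentle algebra. I would organize the proof around one numerical derived invariant: the number of arrows $r$ of the quiver. For gentle algebras this number is invariant under derived equivalence --- it can be read off from the Avella-Alaminos--Geiss invariant --- so if two triangulations produce quivers with different numbers of arrows, their Jacobian algebras cannot be derived equivalent and $(\delta_2)$ fails.

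First I would compute $r$ combinatorially. Writing $t_3, t_2, t_1, t_0$ for the numbers of triangles having respectively $3,2,1,0$ sides that are internal arcs, each internal triangle contributes an oriented $3$-cycle and each triangle with exactly two internal sides contributes a single arrow, so $r = 3t_3 + t_2$. Counting arc--triangle and boundary-segment--triangle incidences and using Euler's formula to fix the total number of triangles $t$ and of boundary segments (which equals $|M|$), these relations solve to give $r = 3t - 2|M| + t_1 + 3t_0$, in which $t$ and $|M|$ depend only on $(S,M)$. Hence $r$ is constant over all triangulations if and only if $t_1 + 3t_0$ --- counting triangles with two or three boundary sides --- is constant.

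For the \emph{only if} direction I would exhibit, in every configuration that is neither one-point-per-component nor a disc with $4$ or $5$ points, two triangulations with different values of $t_1 + 3t_0$, hence different $r$. For a disc with $|M| \geq 6$ this is concrete: a fan triangulation has no internal triangle, while a triangulation containing a central internal triangle has $t_3 \geq 1$, and the arrow counts differ (already for the hexagon one obtains $r = 2$ versus $r = 3$, a discrepancy detected even by the Cartan determinant). When $(S,M)$ is not a disc but some boundary component carries at least two marked points, the extra boundary segments leave room to perform a flip that creates or destroys an internal triangle without a compensating change elsewhere, again altering $r$. For the \emph{if} direction, the disc with $4$ or $5$ points gives algebras with at most two simple modules, for which $(\delta_2)$ is automatic as noted in the introduction; for the one-point-per-component surfaces I would first prove a rigidity lemma that $t_1 + 3t_0$ takes a single value (so $r$ is forced), and then upgrade this necessary invariant to genuine derived equivalence by showing that every flip is realized by a good mutation, i.e.\ that one of the complexes $T^{\pm}_k$ is tilting with endomorphism algebra the flipped Jacobian algebra.

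I expect the last step --- establishing that each flip is a good mutation for the one-point-per-component surfaces --- to be the main obstacle, since it requires the Brenner--Butler/perverse-equivalence machinery rather than a counting argument and uses the explicit structure of these particular gentle algebras; this is the part naturally deferred to the explicit constructions of the later sections. A secondary difficulty is the rigidity lemma and the uniform construction of arrow-count-changing flips in the higher-genus cases, where the presence of boundary loops makes the naive Euler-characteristic bookkeeping insufficient by itself.
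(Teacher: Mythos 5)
Your architecture coincides with the paper's: use a numerical derived invariant of gentle algebras to rule out $(\delta_2)$ whenever some boundary component carries two or more marked points, handle the disc via the $A_n$ mutation class, and prove the positive direction by showing that every flip is a good mutation realized by one of the complexes $T^{\pm}_k$. Your choice of invariant --- the arrow count, which is indeed recoverable from the Avella-Alaminos--Geiss function together with the number of vertices --- is a legitimate substitute for the paper's choice, namely Holm's Cartan determinant $2^t$ where $t$ is the number of internal triangles; a non-good flip changes both quantities, so either works for the necessity direction.

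However, the two steps you defer are exactly where the paper's content lies, and neither is routine. First, for necessity on a non-disc surface, ``the extra boundary segments leave room to perform a flip that creates or destroys an internal triangle'' is not yet an argument: one must exhibit an explicit triangulation and an explicit arc whose flip changes the invariant. The paper does this in Lemma~\ref{l:triangE} by induction on $|M|$: starting from a one-point-per-component triangulation, for each additional marked point $A_{m+1}$ on a component one arranges two arcs $i,j$ emanating from the outermost marked points of that component to share their second endpoint $E$, and inserts the arc $k$ joining $E$ to $A_{m+1}$; the neighborhood of $k$ in the quiver is then forced into one of the bad configurations of Proposition~\ref{p:goodmut}, and Proposition~\ref{p:badmut} converts this into failure of derived equivalence. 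Without such a construction your claim that $t_1+3t_0$ varies over triangulations is unsubstantiated, especially in higher genus where, as you note, Euler-characteristic bookkeeping alone does not produce the required pair of triangulations. Second, for sufficiency you must show that \emph{every} flip in the one-point-per-component case is good; constancy of the arrow count is nowhere near enough, and ``one of $T^{\pm}_k$ is tilting with the flipped Jacobian algebra as endomorphism ring'' is the statement to be proved, not a tool. The paper's route is to characterize good mutations by a finite, computer-verified list of bad vertex neighborhoods (Proposition~\ref{p:goodmut}) and then to show that the Avella-Alaminos--Geiss invariant $b(1,1)+t(0,3)$ excludes all of them (Lemma~\ref{l:AG} and Proposition~\ref{p:AGbt}). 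So your proposal is a correct outline of the paper's strategy, but both of its load-bearing steps are missing.
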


In view of this theorem, any $g \geq 0$ and $b \geq 1$ such that $(g,b)
\neq (0,1)$ give rise to a mutation class of QPs which we will denote
by $\cQ_{g,b}$ having the required properties $(\delta_1)$ and
$(\delta_2)$. Namely, take $S$ to be a bordered surface of genus $g$
with $b$ boundary components, $M$ to be a set of $b$ points containing
one point from each boundary component and consider all the quivers
with potentials arising from the triangulations of $(S,M)$. Note that
the case $(0,1)$ corresponding to the disc is excluded as it has no
triangulations.

We denote by $\cT_{g,b}$ the class of Jacobian algebras of the QPs in
$\cQ_{g,b}$. As these are finite-dimensional gentle algebras, one can
consider the derived invariant developed by Avella-Alaminos and
Geiss~\cite{AvellaAlaminosGeiss08} given as a function $\bN^2 \to \bN$
which we write as a finite sum $\sum_{i=1}^{r} c_i (n_i,m_i)$ where
$c_i>0$ and $(n_1,m_1), \dots, (n_r,m_r)$ are distinct elements of
$\bN^2$.

Further properties of the mutation class $\cQ_{g,b}$ and the
corresponding algebras $\cT_{g,b}$ are elaborated in the next theorem.

\begin{theorem} \label{t:gb}
Let $(g,b) \neq (0,1)$ and let $\cQ_{g,b}$ be the class of quivers with
potentials arising from triangulations of a bordered surface of genus
$g$ with $b$ boundary components and $b$ marked points, one at each
boundary component. Let $\cT_{g,b}$ be the corresponding class of
Jacobian algebras.
\begin{enumerate}
\renewcommand{\theenumi}{\alph{enumi}}
\item
$\cQ_{g,b}$ is a mutation class satisfying conditions
$(\delta_1)$, $(\delta_2)$, $(\delta_3)$ and $(\delta_4)$.

\item
Any quiver in $\cQ_{g,b}$ has $n$ vertices, $e$ arrows and $t$ triangles,
where
\begin{align*}
n = 6(g-1)+4b &,& e=2n-b=12(g-1)+7b &,& t=4(g-1)+2b .
\end{align*}

\item
For any algebra $\gL \in \cT_{g,b}$,
\begin{enumerate}
\renewcommand{\theenumii}{\roman{enumii}}
\item
the determinant of its Cartan matrix is $2^t$,
\item
its Avella-Alaminos-Geiss invariant is $b(1,1)+t(0,3)$.
\end{enumerate}

\item
Any two algebras in $\cT_{g,b}$ are connected by a sequence of algebra
mutations, that is, if $\gL, \gL' \in \cT_{g,b}$ then there exist
algebras $\gL = \gL_0, \gL_1, \dots, \gL_m = \gL'$ in $\cT_{g,b}$ and a
sequence of vertices $k_1, k_2, \dots, k_m$ such that for any $1 \leq j
\leq m$ we have $\gL_j = \mu^-_{k_j}(\gL_{j-1})$ or $\gL_j =
\mu^+_{k_j}(\gL_{j-1})$.

\item
$\cT_{g,b}$ is closed under derived equivalence, that is, if $\gL \in
\cT_{g,b}$ and $\gL'$ is derived equivalent to $\gL$, then also $\gL'
\in \cT_{g,b}$.

\item
When $(g,b) \neq (g',b')$, the classes $\cQ_{g,b}$ and $\cQ_{g',b'}$
(as well as $\cT_{g,b}$ and $\cT_{g',b'}$) are disjoint.
\end{enumerate}
\end{theorem}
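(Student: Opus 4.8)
The plan is to prove Theorem~\ref{t:gb} part by part, proceeding in an order that exploits logical dependencies: most parts reduce to establishing part (a), from which (b), (c), (d), (e), (f) follow by invariant computations and general principles. Let me sketch each.

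\medskip

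For part (a) I would first establish $(\delta_1)$, which by the remark following $(\delta_2)$ in the introduction need only be checked for a single member of $\cQ_{g,b}$; since these are the gentle algebras of~\cite{ABCP10} arising from unpunctured surfaces, finite-dimensionality is known. The heart of the matter is $(\delta_3)$, which implies $(\delta_2)$. The key step is to verify, at each vertex $k$ of each quiver $(Q,W)$ in the class, that one of the complexes $T^-_k$ or $T^+_k$ is tilting and that its endomorphism algebra $\mu^\pm_k(\cP(Q,W))$ coincides with $\cP(\mu_k(Q,W))$ — i.e., that every mutation is good. The plan is to use the combinatorial structure of the quivers: because $M$ has exactly one marked point per boundary component, the triangulations are very constrained, and I expect the local configuration around any vertex (arrow of the triangulation) to fall into a small number of cases. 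For each case I would compute the QP-mutation $\mu_k(Q,W)$ explicitly via the Derksen-Weyman-Zelevinsky rules, compute the algebra mutation $\mu^\pm_k$ via the endomorphism algebra of $T^\pm_k$, and check they agree. I expect this case analysis to be the main obstacle, since one must control the potential and the reduction step (cancellation of $2$-cycles) simultaneously with the tilting computation. Finally $(\delta_4)$ is immediate because the surface admits only finitely many triangulations up to the relevant symmetry.

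\medskip

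Part (b) is a counting argument. I would fix one convenient triangulation of $(S,M)$ and count: by Euler characteristic considerations for a genus-$g$ surface with $b$ boundary components and $b$ marked points, the number of triangles $t$, internal arcs $n$ (which equals the number of quiver vertices), and total arrows $e$ are determined. Since $(\delta_3)$ guarantees all quivers in the class are related by QP-mutations, and mutation preserves the vertex count, it suffices to verify the formulas $n=6(g-1)+4b$, $t=4(g-1)+2b$ on one triangulation and observe $e=2n-b$ as each triangle contributes arrows with the boundary arcs (one per component) accounting for the correction $-b$; the arrow count is likewise mutation-invariant here because, as noted, all quivers in the class have the same number of arrows.

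\medskip

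Parts (c), (d), (e), (f) then follow. For (c), both the Cartan determinant and the Avella-Alaminos-Geiss invariant of~\cite{AvellaAlaminosGeiss08} are derived invariants, so by $(\delta_2)$ they are constant on $\cT_{g,b}$ and I need only compute them on one algebra; the determinant $2^t$ reflects the $t$ triangle relations and the AAG invariant $b(1,1)+t(0,3)$ is read off from the gentle-algebra combinatorics. Part (d) is a restatement of the good-mutation connectivity already secured in (a): since $\cQ_{g,b}$ is a single mutation class and each mutation is realized by an algebra mutation $\mu^\pm_k$, any two members are connected by such a sequence. For part (e), derived-closedness, I would argue that a derived equivalent algebra $\gL'$ shares all the derived invariants of (c), and combine this with a structural characterization — that the algebras in $\cT_{g,b}$ are exactly the gentle algebras with the prescribed AAG invariant and Cartan determinant, which pins down $\gL'$ up to membership in the class; this may require invoking that derived equivalence preserves gentleness. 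Finally (f) follows from (c): distinct pairs $(g,b)$ yield distinct values of $(t, \text{AAG invariant})$ — one checks $b(1,1)+t(0,3)$ determines $b$ and $t$, hence $(g,b)$ — so the invariant separates the classes, forcing $\cQ_{g,b}$ and $\cQ_{g',b'}$ (and the corresponding $\cT$'s) to be disjoint. The main obstacle across all parts remains the good-mutation verification in (a); the rest is invariant bookkeeping.
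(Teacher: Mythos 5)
Your skeleton (reduce everything to part (a) via $(\delta_3)$, then do invariant bookkeeping for the rest) matches the paper's, but the two hardest steps are left unresolved in a way that constitutes genuine gaps. For (a), you plan a direct case analysis of local configurations, computing the DWZ mutation and the tilting endomorphism algebra in each case and checking they agree. The missing idea is \emph{why the bad local configurations cannot occur} when there is exactly one marked point per boundary component: such configurations certainly do occur for general unpunctured surfaces (discs with many marked points, say -- this is exactly how Proposition~\ref{p:SMd2} manufactures non-derived-equivalent mutations), so something specific to this class of surfaces must be invoked, and "the triangulations are very constrained" does not identify the constraint. The paper's mechanism is the Avella-Alaminos--Geiss invariant: Lemma~\ref{l:AG} computes directly from the triangulation that $\phi(\gL)=b(1,1)+t(0,3)$ (each boundary component contributes exactly one permitted thread, winding around its unique marked point, and one forbidden thread), and Proposition~\ref{p:AGbt} shows that a gentle algebra with this invariant has no vertex whose incident arrows form one of the four bad patterns; combined with the neighborhood characterization of good mutations this yields $(\delta_3)$. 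Note also that you propose to deduce the AAG computation in (c) \emph{from} $(\delta_2)$ by derived invariance, whereas in the paper it is the input to proving $(\delta_2)$; your ordering is consistent only if you can close (a) without it, which is precisely the unresolved point.

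For (e) you assert a characterization -- that $\cT_{g,b}$ consists exactly of the gentle algebras with the prescribed AAG invariant and Cartan determinant -- and this assertion \emph{is} the content of (e), not a tool for proving it. After Schr\"oer--Zimmermann gives that $\gL'$ is gentle with $\phi(\gL')=b(1,1)+t(0,3)$, one must still show $\gL'$ arises from a triangulation of the \emph{correct} surface: the paper does this by (i) using the invariant to force all relations into radical-square-zero $3$-cycles, so that the ABCP characterization realizes $\gL'$ as a surface algebra; (ii) showing all mutations of that QP, and of all its iterated mutations, are good, so its mutation class satisfies $(\delta_2)$; (iii) invoking the necessity result to pin the surface down to one marked point per component (the small discs being excluded by $\phi(KA_n)=(n+1,n-1)$) and matching invariants to get $(g',b')=(g,b)$. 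None of these steps appears in your sketch. A smaller issue: in (b) your justification that the arrow count is constant on the class ("because, as noted, all quivers in the class have the same number of arrows") is circular; the repair is that the counts $2n=3t+2b$ and $e=3t+b$ hold for \emph{every} triangulation by the double-counting of (arc, triangle) incidences, or alternatively that good mutations preserve the numbers of arrows and triangles.
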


In particular we see that for any even positive integer $n$ there
exists at least one mutation class of QPs with the properties
$(\delta_1)$, $(\delta_2)$, $(\delta_3)$, $(\delta_4)$ whose quivers
have $n$ vertices. For small number of vertices, details for these
classes are given in Table~\ref{tab:Qgb} and representative quivers
from each class are shown in Figure~\ref{fig:Qgb}. To make our results
more explicit we will outline a procedure to draw these quivers for any
$(g,b)$ in Section~\ref{sec:quivers}.

\begin{table}
\begin{center}
\begin{tabular}{|c||c|c|c||c|}
\hline
$(g,b)$ & Vertices & Arrows & Triangles & Size of $\cQ_{g,b}$ \\
\hline (0,2) & 2 & 2 & 0 & 1\\
(1,1) & 4 & 7 & 2 & 1 \\
(0,3) & 6 & 9 & 2 & 6 \\
(1,2) & 8 & 14 & 4 & 56 \\
(0,4) & 10 & 16 & 4 & 140 \\
(2,1) & 10 & 19 & 6 & 105 \\
(1,3) & 12 & 21 & 6 & 3236 \\
\hline
\end{tabular}
\end{center}
\caption{The numbers of vertices, arrows and triangles for quivers
in $\cQ_{g,b}$ and the size of $\cQ_{g,b}$ for small values of $(g,b)$.}
\label{tab:Qgb}
\end{table}

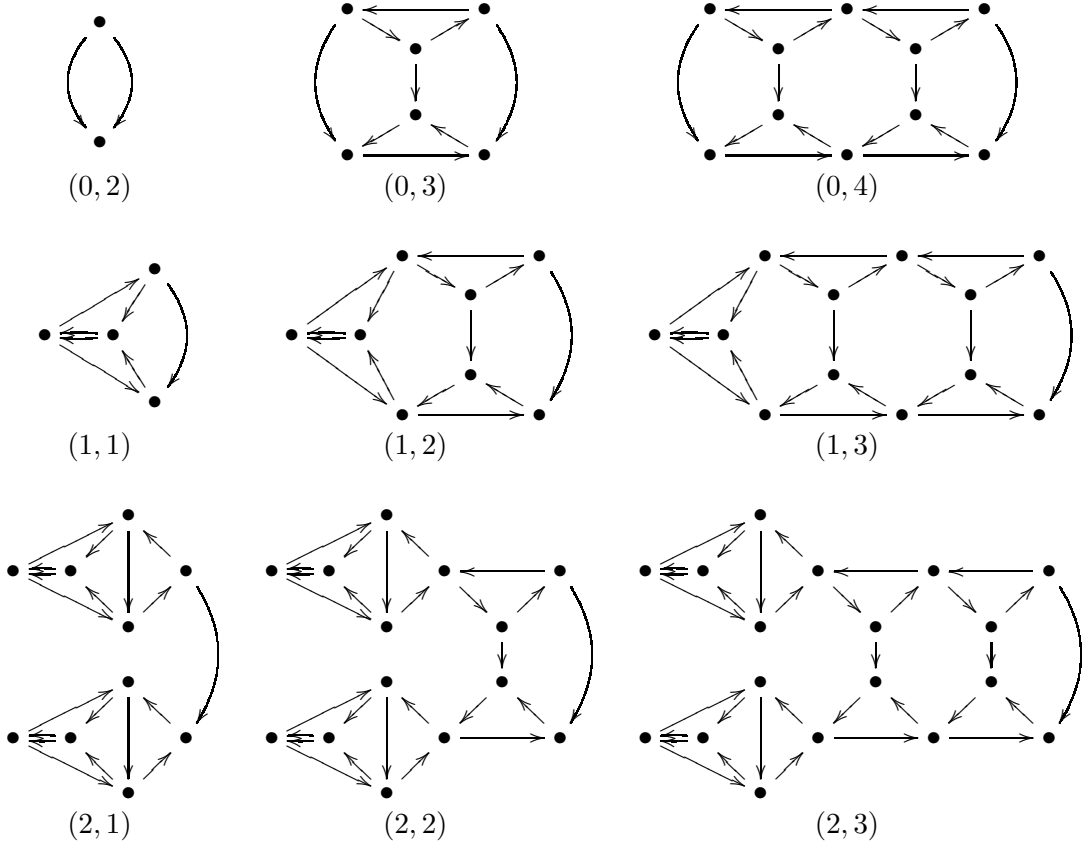
\begin{figure}
\[
\begin{array}{ccc}
\begin{array}{c}
\xymatrix@=0.3pc{
{\bullet} \ar@/_1pc/[dddd] \ar@/^1pc/[dddd] \\ \\ \\ \\
{\bullet}
}
\end{array}
&
\begin{array}{c}
\xymatrix@=0.3pc{
{\bullet} \ar@/_1pc/[dddd] \ar[drr]
&& && {\bullet} \ar[llll] \ar@/^1pc/[dddd] \\
&& {\bullet} \ar[dd] \ar[urr] \\ \\
&& {\bullet} \ar[dll] \\
{\bullet} \ar[rrrr] && && {\bullet} \ar[ull]
}
\end{array}
&
\begin{array}{c}
\xymatrix@=0.3pc{
{\bullet} \ar@/_1pc/[dddd] \ar[drr]
&& && {\bullet} \ar[drr] \ar[llll]
&& && {\bullet} \ar[llll] \ar@/^1pc/[dddd] \\
&& {\bullet} \ar[dd] \ar[urr]
&& && {\bullet} \ar[dd] \ar[urr] \\ \\
&& {\bullet} \ar[dll]
&& && {\bullet} \ar[dll] \\
{\bullet} \ar[rrrr]
&& && {\bullet} \ar[ull] \ar[rrrr]
&& && {\bullet} \ar[ull]
}
\end{array}
\\
(0,2) & (0,3) & (0,4)
\\ \\
\begin{array}{c}
\xymatrix@=0.3pc{
&&& {\bullet} \ar[ddl] \ar@/^1pc/[dddd] \\ \\
{\bullet} \ar[uurrr] \ar[ddrrr]
&& {\bullet} \ar@/_0.1pc/[ll] \ar@/^0.1pc/[ll] \\ \\
&&& {\bullet} \ar[uul]
}
\end{array}
&
\begin{array}{c}
\xymatrix@=0.3pc{
&&& {\bullet} \ar[ddl] \ar[drr]
&& && {\bullet} \ar[llll] \ar@/^1pc/[dddd] \\
&&& && {\bullet} \ar[dd] \ar[urr] \\
{\bullet} \ar[uurrr] \ar[ddrrr]
&& {\bullet} \ar@/_0.1pc/[ll] \ar@/^0.1pc/[ll] \\
&&& && {\bullet} \ar[dll] \\
&&& {\bullet} \ar[uul] \ar[rrrr]
&& && {\bullet} \ar[ull]
}
\end{array}
&
\begin{array}{c}
\xymatrix@=0.3pc{
&&& {\bullet} \ar[ddl] \ar[drr]
&& && {\bullet} \ar[llll] \ar[drr]
&& && {\bullet} \ar[llll] \ar@/^1pc/[dddd] \\
&&& && {\bullet} \ar[dd] \ar[urr]
&& && {\bullet} \ar[dd] \ar[urr] \\
{\bullet} \ar[uurrr] \ar[ddrrr]
&& {\bullet} \ar@/_0.1pc/[ll] \ar@/^0.1pc/[ll] \\
&&& && {\bullet} \ar[dll] && && {\bullet} \ar[dll] \\
&&& {\bullet} \ar[uul] \ar[rrrr]
&& && {\bullet} \ar[ull] \ar[rrrr]
&& && {\bullet} \ar[ull]
}
\end{array}
\\
(1,1) & (1,2) & (1,3)
\\ \\
\begin{array}{c}
\xymatrix@=0.8pc{
&& {\bullet} \ar[dl] \ar[dd] \\
{\bullet} \ar[urr] \ar[drr]
& {\bullet} \ar@/_0.1pc/[l] \ar@/^0.1pc/[l]
& & {\bullet} \ar[ul] \ar@/^1pc/[ddd] \\
&& {\bullet} \ar[ur] \ar[ul] \\
&& {\bullet} \ar[dl] \ar[dd] \\
{\bullet} \ar[urr] \ar[drr]
& {\bullet} \ar@/_0.1pc/[l] \ar@/^0.1pc/[l]
& & {\bullet} \ar[ul] \\
&& {\bullet} \ar[ur] \ar[ul]
}
\end{array}
&
\begin{array}{c}
\xymatrix@=0.8pc{
&& {\bullet} \ar[dl] \ar[dd] \\
{\bullet} \ar[urr] \ar[drr]
& {\bullet} \ar@/_0.1pc/[l] \ar@/^0.1pc/[l]
& & {\bullet} \ar[ul] \ar[dr]
&& {\bullet} \ar[ll] \ar@/^1pc/[ddd] \\
&& {\bullet} \ar[ur] \ar[ul] && {\bullet} \ar[ur] \ar[d] \\
&& {\bullet} \ar[dl] \ar[dd] && {\bullet} \ar[dl] \\
{\bullet} \ar[urr] \ar[drr]
& {\bullet} \ar@/_0.1pc/[l] \ar@/^0.1pc/[l]
& & {\bullet} \ar[ul] \ar[rr] && {\bullet} \ar[ul] \\
&& {\bullet} \ar[ur] \ar[ul]
}
\end{array}
&
\begin{array}{c}
\xymatrix@=0.8pc{
&& {\bullet} \ar[dl] \ar[dd] \\
{\bullet} \ar[urr] \ar[drr]
& {\bullet} \ar@/_0.1pc/[l] \ar@/^0.1pc/[l]
& & {\bullet} \ar[ul] \ar[dr]
&& {\bullet} \ar[ll] \ar[dr] && {\bullet} \ar[ll] \ar@/^1pc/[ddd] \\
&& {\bullet} \ar[ur] \ar[ul] && {\bullet} \ar[ur] \ar[d]
&& {\bullet} \ar[ur] \ar[d] \\
&& {\bullet} \ar[dl] \ar[dd] && {\bullet} \ar[dl]
&& {\bullet} \ar[dl] \\
{\bullet} \ar[urr] \ar[drr]
& {\bullet} \ar@/_0.1pc/[l] \ar@/^0.1pc/[l]
& & {\bullet} \ar[ul] \ar[rr] && {\bullet} \ar[ul] \ar[rr]
&& {\bullet} \ar[ul] \\
&& {\bullet} \ar[ur] \ar[ul]
}
\end{array}
\\
(2,1) & (2,2) & (2,3)
\end{array}
\]
\caption{Representative quivers in each $\cQ_{g,b}$ for small values of
$(g,b)$. The potentials are sums of the oriented 3-cycles (chosen such
that any arrow belongs to at most one cycle).}
\label{fig:Qgb}
\end{figure}

The theorem implies that each of the classes $\cQ_{g,b}$ (and
$\cT_{g,b}$) enjoys two remarkable properties. First, all the quivers
in $\cQ_{g,b}$ have the same number of arrows. This is quite rare for
an arbitrary mutation class (which is then necessarily finite). Second,
not only that all the algebras in $\cT_{g,b}$ are derived equivalent,
but $\cT_{g,b}$ is closed under derived equivalence and hence
constitutes an entire derived equivalence class of algebras which can
be explicitly described using QP and their mutations.

Explicit descriptions of all the algebras derived equivalent to a given
algebra $\gL$ are quite rare since by Rickard theorem~\cite{Rickard89}
any endomorphism ring of a tilting complex over $\gL$ gives rise to an
algebra derived equivalent to $\gL$, and it is usually hard to control
all the possible tilting complexes. Some notable instances of algebras
$\gL$ where such explicit descriptions have been obtained include the
path algebras of quivers of Dynkin types $A$ \cite{AssemHappel81} and
$D$ \cite{Keller91} as well as affine type $\tilde{A}$
\cite{AssemSkowronski87}. Other instances are the classes of Brauer
tree algebras with fixed numerical parameters (number of edges,
multiplicity of the exceptional vertex) which are closed under derived
equivalence and moreover any two algebras in a class can be connected
by a sequence of algebra mutations, see~\cite[\S5]{KoenigZimmermann98}.

\subsection{Exceptional mutation classes}
\label{sec:exceptional}

We now turn our attention to the $11$ exceptional quivers whose
mutation classes are finite. Of these, $E_6$, $E_7$, $E_8$,
$\tilde{E}_6$, $\tilde{E}_7$ and $\tilde{E}_8$ are acyclic and hence
their mutation classes will not satisfy condition $(\delta_2)$. In
order to deal with $E_6^{(1,1)}$, $E_7^{(1,1)}$ and $E_8^{(1,1)}$, we
consider the more general QP given by the quiver
\begin{align*}
Q_{p,q,r}: & &
\xymatrix@=0.5pc{
&& && && && && {\bullet_1}
\ar[dddll]^{\beta_2} \ar[rr] && {\bullet_2} \ar[rr]
&& {\dots} \ar[rr] && {\bullet_{q-1}} \\
&& && && && {\bullet_a}
\ar[urr]^{\beta_1} \ar[dddrr]^{\gamma_1} \ar[dll]_{\alpha_1} \\
{\bullet_{p-1}} && {\dots} \ar[ll] &&
{\bullet_2} \ar[ll] && {\bullet_1} \ar[ll] \ar[drr]_{\alpha_2} \\
&& && && && {\bullet_b} \ar@/^0.2pc/[uu]^{\eps} \ar@/_0.2pc/[uu]_{\eta} \\
&& && && && && {\bullet_1}
\ar[ull]^{\gamma_2} \ar[rr] && {\bullet_2} \ar[rr]
&& {\dots} \ar[rr] && {\bullet_{r-1}}
}
\end{align*}
and the potential $W_{p,q,r} = \eps(\alpha_1 \alpha_2 + \beta_1
\beta_2) + \eta(\alpha_1 \alpha_2 + \gamma_1 \gamma_2)$ for some $p,q,r
\geq 2$. The quivers $E_6^{(1,1)}$, $E_7^{(1,1)}$, $E_8^{(1,1)}$
coincide with the quivers $Q_{3,3,3}$, $Q_{2,4,4}$, $Q_{2,3,6}$,
respectively.

Similar diagrams appear in singularity theory as Coxeter-Dynkin
diagrams~\cite[\S 3.9]{LenzingdelaPena06}. In fact, the Jacobian
algebra $\cP(Q_{p,q,r},W_{p,q,r})$ can be realized as the endomorphism
algebra of a cluster-tilting object in the cluster category of the
weighted projective line $\bX_{p,q,r}$~\cite{BKL10}. As already
observed by Barot and Geiss~\cite{BarotGeiss09}, the quivers
$E_6^{(1,1)}$, $E_7^{(1,1)}$, $E_8^{(1,1)}$ can thus be realized by the
tubular cluster categories corresponding to the tubular weights
$(3,3,3)$, $(2,4,4)$, $(2,3,6)$ respectively. Moreover, the quiver
$E_6^{(1,1)}$ which turns out to be of particular interest with regard
to derived equivalence is also realized by the stable category of
modules over the preprojective algebra of Dynkin type $D_4$ studied by
Geiss-Leclerc-Schr\"{o}er~\cite{GLS06}.

\begin{theorem}
Let $p,q,r \geq 2$ and let $\cQ_{p,q,r}$ be the mutation class of
$(Q_{p,q,r}, W_{p,q,r})$ defined above. Then:

\begin{enumerate}
\renewcommand{\theenumi}{\alph{enumi}}
\item
$\cQ_{p,q,r}$ satisfies condition $(\delta_2)$ if and only if $(p,q,r)=(3,3,3)$.

\item
The class $\cQ_{3,3,3}$ corresponding to the quiver $E_6^{(1,1)}$
consists of $49$ QPs and satisfies conditions $(\delta_1)$,
$(\delta_2)$, $(\delta_3)$, $(\delta_4)$.

\item
Furthermore, the Jacobian algebra $\gL = \cP(Q,W)$ of any $(Q,W) \in
\cQ_{3,3,3}$ has the following properties:
\begin{enumerate}
\renewcommand{\theenumii}{\roman{enumii}}
\item
For any vertex $k$, one and only one of the algebra mutations
$\mu^-_k(\gL)$, $\mu^+_k(\gL)$ is defined;

\item
The determinant of the Cartan matrix of $\gL$ is $4$ and its Coxeter
polynomial is $(x^2+1)^4$.
\end{enumerate}
\end{enumerate}
\end{theorem}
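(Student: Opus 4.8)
The plan is to obtain (b) and the first half of (c) from an explicit analysis of the finite class $\cQ_{3,3,3}$, to deduce the ``if'' direction of (a) from (b), and to prove the ``only if'' direction of (a) by a derived-invariant obstruction. To set up, I would first read off the relations of $\gL_{p,q,r}:=\cP(Q_{p,q,r},W_{p,q,r})$ from the cyclic derivatives of $W_{p,q,r}$: besides the two relations $\alpha_1\alpha_2+\beta_1\beta_2$ and $\alpha_1\alpha_2+\gamma_1\gamma_2$, one gets $(\eps+\eta)\alpha_1$, $\alpha_2(\eps+\eta)$, $\eps\beta_1$, $\beta_2\eps$, $\eta\gamma_1$ and $\gamma_2\eta$. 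Combining these shows that the central $3$-cycles already vanish, e.g.\ $\eps\alpha_1\alpha_2=-\eps\beta_1\beta_2=0$ and $\alpha_1\alpha_2\eps=-\beta_1\beta_2\eps=0$, so the initial algebra is finite-dimensional for every $p,q,r\geq 2$; by the cited remark that $(\delta_1)$ need only be checked on one member, this yields $(\delta_1)$ for the whole class. I would also keep in mind the description $\gL_{p,q,r}=\End_{\cC}(T)$ for a cluster-tilting object $T$ in the cluster category $\cC$ of $\bX_{p,q,r}$, which realizes the members of $\cQ_{p,q,r}$ as the reachable cluster-tilting objects of $\cC$.

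For (b) and (c)(i) I would generate $\cQ_{3,3,3}$ by iterated QP-mutation from $(Q_{3,3,3},W_{3,3,3})$, producing the $49$ quivers with potentials and hence $(\delta_4)$. To get $(\delta_3)$, which implies $(\delta_2)$, I would run through each of the $49$ QPs and each vertex $k$, form the candidate complexes $T^-_k$ and $T^+_k$ over $\gL=\cP(Q,W)$, and check that exactly one of them is tilting with endomorphism algebra isomorphic to $\cP(\mu_k(Q,W))$; the ``exactly one'' statement is precisely (c)(i). This vertex-by-vertex verification of goodness over the entire class is the main obstacle, both in bulk and because it is where the distinguished nature of $(3,3,3)$ (reflected in its realization via the preprojective algebra of $D_4$) must be exploited --- e.g.\ by reusing BB-tilt computations across locally mutation-equivalent configurations --- so that only a manageable number of genuinely distinct local cases remain.

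For the remaining invariant statements I would use that, once $(\delta_2)$ is established, the determinant of the Cartan matrix and the Coxeter polynomial are derived invariants, so (c)(ii) reduces to a single computation on $\gL_{3,3,3}$: a path count gives $\det C=4$, and the characteristic polynomial of $\Phi=-C^{-t}C$ is $(x^2+1)^4$. The same invariants drive the ``only if'' direction of (a). Here the Cartan determinant is useless, since a direct computation --- in which the outer arm vertices contribute unimodularly --- shows $\det C=4$ for every $\gL_{p,q,r}$; instead I would use the Coxeter polynomial, which is well defined because $\det C=4\neq 0$ and is a derived invariant even without control of the global dimension, as the Cartan matrices of derived equivalent algebras are integrally congruent and the corresponding matrices $-C^{-t}C$ are therefore conjugate. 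For each $(p,q,r)\neq(3,3,3)$ I would then exhibit two members of $\cQ_{p,q,r}$ with different Coxeter polynomials: for the two remaining tubular types $(2,4,4)$ and $(2,3,6)$ the class is finite and this is a direct (if lengthy) check, while in the domestic and wild ranges $1/p+1/q+1/r\neq 1$ the class is infinite and I would produce an explicit mutation sequence along which the Coxeter polynomial --- equivalently, the finiteness of the global dimension --- changes, so that $(\delta_2)$ fails.
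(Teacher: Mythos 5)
Your treatment of (b), (c) and the ``if'' direction of (a) matches the paper: both reduce these to a finite computation over the $49$ QPs of $\cQ_{3,3,3}$ using the good-mutation criteria of \cite{Ladkani10} (the paper says exactly this), and your direct check that $\cP(Q_{p,q,r},W_{p,q,r})$ is finite-dimensional is a reasonable way to settle $(\delta_1)$. The genuine gap is in the ``only if'' direction of (a). You dismiss the Cartan determinant as ``useless'' because it equals $4$ for every \emph{initial} algebra $\gL_{p,q,r}$ --- but that is not how the invariant is meant to be used. The paper compares two members \emph{within the same} mutation class: when some parameter, say $r$, exceeds $3$, a single mutation at the vertex labeled $r-2$ on the $r$-arm produces a Jacobian algebra whose Cartan determinant is $8$, not $4$, and $|\det C|$ is an honest derived invariant regardless of global dimension. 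This one observation disposes of all $(p,q,r)$ with $\max(p,q,r)>3$, including the tubular cases $(2,4,4)$ and $(2,3,6)$ that you propose to handle by a ``lengthy direct check.''

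Your substitute invariant is also on shaky ground. The claim that the Coxeter polynomial is a derived invariant ``even without control of the global dimension, as the Cartan matrices of derived equivalent algebras are integrally congruent'' is not justified: without finite global dimension one only gets $C_\Gamma=P^{T}C_\gL Q$ for possibly different $P,Q\in GL_n(\mathbb{Z})$ (the identifications of $K_0(\per)$ and $K_0(\cD^b)$ need not agree), so only $|\det C|$ survives in general --- and the Jacobian algebras in these classes do have infinite global dimension. Moreover, for general $(p,q,r)$ you never actually exhibit two members with distinct Coxeter polynomials; you only promise a mutation sequence. Finally, your case split is factually off: the remaining cases $(2,2,2)$, $(2,2,3)$, $(2,3,3)$ are the mutation classes of the acyclic quivers $\tilde{D}_4$, $\tilde{D}_5$, $\tilde{E}_6$, which are \emph{finite}, not infinite as your ``domestic range'' claim asserts; the paper kills these by the standard observation that an acyclic mutation class on $\geq 3$ vertices contains non-hereditary cluster-tilted algebras of infinite global dimension, hence not derived equivalent to the (hereditary) path algebra. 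As written, your argument for the ``only if'' direction does not go through.
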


Finally we have to deal with the quivers $X_6$ and $X_7$ discovered by
Derksen and Owen \cite{DerksenOwen08}. While for $X_7$ we could not find
a potential whose Jacobian algebra is finite-dimensional, for $X_6$ we
have the following result.

\begin{theorem}
Consider the quiver $X_6$ and the potential $W_6$ as given below:
\begin{align*}
\begin{array}{c}
\xymatrix@=0.5pc{
&& {\bullet} \ar[dd] \\
{\bullet} \ar@/^0.2pc/[dd]^{\eps_1} \ar@/_0.2pc/[dd]_{\beta_1} && &&
{\bullet} \ar@/^0.2pc/[dd]^{\beta_2} \ar@/_0.2pc/[dd]_{\eps_2} \\
&& {\bullet} \ar[ull]_{\alpha_1} \ar[urr]^{\alpha_2} \\
{\bullet} \ar[urr]_{\gamma_1} && && {\bullet} \ar[ull]^{\gamma_2}
}
\end{array}
&, &
W_6 = \alpha_1 \beta_1 \gamma_1 + \alpha_2 \beta_2\gamma_2 +\alpha_1
\eps_1 \gamma_1 \alpha_2 \eps_2 \gamma_2 .
\end{align*}
Then the mutation class of $(X_6,W_6)$ consists of $5$ QPs and
satisfies conditions $(\delta_1)$, $(\delta_2)$, $(\delta_3)$,
$(\delta_4)$. The Cartan determinant of the corresponding Jacobian
algebras is $4$ and their Coxeter polynomial is $(x-1)^6$.
\end{theorem}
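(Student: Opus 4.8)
The plan is to verify all claims about $(X_6,W_6)$ by a finite, fully explicit computation, since the assertion is that the entire mutation class contains only $5$ quivers with potentials. First I would fix the Jacobian algebra $\gL_0 = \cP(X_6,W_6)$ and compute it directly from the potential: the relations are the cyclic derivatives $\partial_a W_6$ for each arrow $a$, so from the cubic terms $\alpha_1\beta_1\gamma_1$ and $\alpha_2\beta_2\gamma_2$ together with the sextic term I would read off the relation space, check that the resulting algebra is finite-dimensional (this establishes $(\delta_1)$ for one member, which by the remark after property $(\delta_2)$ suffices), and record a $K$-basis. From this basis I would compute the Cartan matrix and confirm $\det = 4$, and compute the Coxeter polynomial of the Euler form, confirming it equals $(x-1)^6$. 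Because the $6$ vertices admit a visible symmetry (the $\eps_i/\beta_i$ pairs and the reflection swapping the left and right halves), I expect these numerical invariants to factor through that symmetry, which cuts the bookkeeping.

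Next I would generate the mutation class combinatorially. Starting from $X_6$ I would mutate at each vertex, using the QP mutation rules of \cite{DWZ08} to track the potential as well as the quiver, reduce each output (cancel $2$-cycles, trim the potential), and test for isomorphism against the quivers already found. The claim that the class has exactly $5$ elements means this breadth-first search closes after a bounded number of steps; I would carry it out explicitly and exhibit the $5$ representatives together with the mutation arrows connecting them.

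With the $5$ QPs in hand I would establish the remaining properties. For $(\delta_3)$ I would check, at every vertex $k$ of every one of the $5$ quivers, that one of the two algebra mutations $\mu^-_k$, $\mu^+_k$ is defined and that the tilting complex $T^{\mp}_k$ of the displayed form really is tilting (no loops at $k$ is needed, so I would first confirm the quivers are loop-free), and that its endomorphism algebra is isomorphic to $\cP(\mu_k(Q,W))$; when both algebra mutations are defined I would verify they coincide. This is the ``good mutation'' condition, and $(\delta_3)$ then formally implies $(\delta_2)$, so all $5$ Jacobian algebras are derived equivalent; $(\delta_4)$ is immediate once the class is known to have size $5$. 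Finally, since mutation preserves the Cartan determinant and the Coxeter polynomial is a derived invariant, the values $4$ and $(x-1)^6$ computed for $\gL_0$ propagate to all members, but I would nonetheless spot-check them on the other four algebras as a consistency test.

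The main obstacle is the verification of the good-mutation condition $(\delta_3)$ rather than the enumeration: one must correctly compute the potential after each QP mutation, carry out the reduction removing $2$-cycles, and then match the reduced Jacobian algebra against the endomorphism algebra of the explicit tilting complex $T^{\mp}_k(\gL)$. The sextic term in $W_6$ makes the cyclic-derivative and reduction steps delicate, since mutation can produce long paths whose cancellation is easy to mishandle; keeping the potential in a reduced normal form at each stage, and exploiting the left--right symmetry of $X_6$ to halve the number of independent vertices to check, is where I would concentrate the care.
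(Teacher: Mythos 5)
Your proposal is correct and follows essentially the same route as the paper, whose entire proof for this theorem is a one-line appeal to computing the five Jacobian algebras in the finite mutation class and running the good-mutation checks of~\cite{Ladkani10}. The only difference is that you would certify goodness by directly computing $\End T^{\mp}_k(\gL)$ and comparing it with $\cP(\mu_k(Q,W))$, whereas the paper's cited algorithms avoid that harder comparison by using only the data of which algebra mutations are defined (via $\cD$-split sequences and the $2$-Calabi-Yau realization); both are valid finite verifications.
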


\subsection*{Acknowledgements}
I would like to thank Maxim Kontsevich and Frol Zapolsky for useful
discussions.

\section{On the proofs}

\subsection{Good mutations for gentle Jacobian algebras}

Consider a gentle algebra $\gL = KQ/I$ given as a quiver with relations
over an algebraically closed field. Since the only relations are
zero-relations (of length $2$), the algebra $\gL$ has a basis
consisting of all the non-zero paths.

A \emph{maximal non-zero path} (known also as a non-trivial permitted
thread) is a path $\alpha_1 \alpha_2 \dots \alpha_m$ between some
vertices $i$ and $j$ which is non-zero in $\gL$ and is maximal with
this property, that is, for any arrow $\beta$ ending at $i$ and arrow
$\gamma$ starting at $j$ we have $\beta \alpha_1 = 0$ and $\alpha_m
\gamma = 0$ in $\gL$. Any arrow is contained in a unique maximal
non-zero path.

Similarly, a \emph{maximal zero path} (known also as a non-trivial
forbidden thread) is a path $\alpha_1 \alpha_2 \dots \alpha_m$ in $Q$
such that $\alpha_s \alpha_{s+1} = 0$ in $\gL$ for all $1 \leq s < m$
which is maximal with this property. For further details we refer the
reader to~\cite{AvellaAlaminosGeiss08}.

We start by characterizing when algebra mutations are defined in terms
of maximal paths. In addition, the next proposition implies that
algebra mutations of gentle algebras without loops coincide with
Brenner-Butler tilts~\cite{BrennerButler80}.

\begin{prop} \label{p:algmut}
Let $\gL=KQ/I$ be gentle and let $k$ be a vertex of $Q$ without loops.
\begin{enumerate}
\renewcommand{\theenumi}{\alph{enumi}}
\item
$\mu^-_k(\gL)$ is defined if and only if no maximal non-zero path
starts at $k$.

\item
$\mu^+_k(\gL)$ is defined if and only if no maximal non-zero path ends
at $k$.

\item
$\mu^-_k(\gL)$ is defined if and only if the Brenner-Butler tilting
module of $\gL$ at $k$ is defined.

\item
$\mu^+_k(\gL)$ is defined if and only if the Brenner-Butler tilting
module of $\gL^{op}$ at $k$ is defined.
\end{enumerate}
\end{prop}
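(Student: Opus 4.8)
The plan is to analyze when the complex $T^-_k(\gL)$ is a tilting complex, translating the general algebraic condition into the combinatorial language of maximal paths available for gentle algebras. Recall that $T^-_k(\gL) = (P_k \xrightarrow{f} \bigoplus_{j \to k} P_j) \oplus (\bigoplus_{i \neq k} P_i)$, where $f$ is induced by the arrows ending at $k$. For this to be a tilting complex, two things must hold: the total object must generate the perfect derived category (which is automatic here since all indecomposable projectives appear up to the two-term complex), and the object must have no self-extensions, i.e.\ $\Hom_{\cD(\gL)}(T^-_k, T^-_k[n]) = 0$ for $n \neq 0$. The only possible nonzero negative/positive self-extensions come from maps between the two-term summand $P_k \to \bigoplus_{j\to k} P_j$ and the stalk summands $P_i$. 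I would compute these Hom-spaces explicitly using the projective resolutions of simples over a gentle algebra.

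First I would recall the description of $\Hom(P_a, P_b)$ and of the projective modules for gentle algebras: $\Hom_\gL(P_a, P_b)$ has a basis given by the non-zero paths from $b$ to $a$ in $Q$, and the radical structure of $P_b$ is governed by the (at most two) maximal non-zero paths starting at $b$. The map $f$ is injective precisely because $k$ has no loops and $\gL$ is gentle, so the mapping cone computations reduce to understanding the cokernel $\operatorname{coker} f$, which is (a quotient related to) the simple $S_k$ or more precisely the part of $P_k$ not covered by the images of the $P_j$. The key obstruction to $T^-_k$ being tilting is the existence of a nonzero map from the stalk complexes into the two-term complex in a degree that creates an extension — and I claim this happens exactly when there is a maximal non-zero path \emph{starting} at $k$, because such a path produces a composition $P_k \to P_\ell$ that does not factor through $f$, obstructing the acyclicity of the relevant Hom-complex.

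The precise mechanism I would verify: a maximal non-zero path starting at $k$ corresponds to an arrow $\alpha$ with source $k$ such that $\alpha$ extends to a full permitted thread, meaning the radical of $P_k$ is not entirely "absorbed" by the pullback along arrows into $k$. Dually, $\mu^+_k$ involves $T^+_k$ with $g: \bigoplus_{k\to j} P_j \to P_k$ induced by arrows starting at $k$, and by the obvious left-right symmetry (passing to $\gL^{op}$, which is again gentle, and noting that maximal non-zero paths starting at $k$ in $\gL$ correspond to maximal non-zero paths ending at $k$ in $\gL^{op}$), part (b) follows from part (a). So the bulk of the work is part (a), and then (b) is obtained formally.

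For parts (c) and (d), I would invoke the comparison between the complex $T^-_k$ and the classical Brenner--Butler (APR-type) construction: when $k$ has no loops, the two-term tilting complex $T^-_k$ is the image under the standard embedding of modules into the derived category of the Brenner--Butler tilting module associated to $k$, whose defining requirement (that $S_k$ is not a direct summand of the socle of the injective, equivalently that the APR tilt at $k$ exists) matches verbatim the "no maximal non-zero path starts at $k$" condition established in (a). Thus $\mu^-_k$ defined $\Leftrightarrow$ $T^-_k$ tilting $\Leftrightarrow$ Brenner--Butler tilt of $\gL$ at $k$ defined, giving (c), and (d) follows again by the $\gL \leftrightarrow \gL^{op}$ duality. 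The main obstacle I anticipate is the careful bookkeeping in (a): proving that the Hom-complex self-extensions vanish \emph{precisely} in the stated combinatorial regime requires tracking how paths through $k$ interact with the two arrows into and out of $k$ (using the gentle conditions that limit these to at most two each), and handling the degenerate cases where $k$ has fewer than two incoming or outgoing arrows.
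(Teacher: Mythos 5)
Your overall framework is the right one --- the paper itself does not prove this proposition but merely cites \cite[Prop.~2.3]{Ladkani10}, and that reference proceeds exactly as you propose: decide when the two-term complexes $T^{\pm}_k$ are tilting and compare with Brenner--Butler tilting modules. Your reduction of (b) and (d) to (a) and (c) via $\gL^{op}$ is fine, and so is the observation that the only relevant obstructions are the degree $\pm 1$ morphism spaces between the two-term summand and the stalk summands (the self-extensions of the two-term summand are subsumed because $\bigoplus_{j\to k}P_j$ lies in $\mathrm{add}(\bigoplus_{i\ne k}P_i)$ when $k$ has no loops).

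However, the key mechanism you propose for part (a) is misidentified, in two related places. First, with the convention forced by the definition of $f$ (an arrow $a\colon j\to k$ induces a map $P_k\to P_j$), the maps $P_k\to P_\ell$ for $\ell\ne k$ are given by the nonzero paths \emph{ending} at $k$; every such path has as its last letter an arrow into $k$ and therefore automatically factors through $f$. So the space $\Hom(T^-_k,T^-_k[1])$ you point to vanishes identically and never carries an obstruction. The genuine obstruction sits in $\Hom(T^-_k,T^-_k[-1])$: a nonzero map $P_\ell\to P_k$ is given by a nonzero path $p$ whose first letter is an arrow $\beta$ starting at $k$, and post-composing with $f$ concatenates $p$ with the arrows $\alpha$ ending at $k$; this kills $p$ exactly when $\alpha\beta\in I$ for every arrow $\alpha$ ending at $k$, i.e.\ exactly when the permitted thread containing $\beta$ starts at $k$. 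Second, your claim that $f$ is injective ``because $k$ has no loops and $\gL$ is gentle'' is false and in fact begs the question: $\ker f$ is nonzero precisely when some maximal non-zero path starts at $k$ (or when no arrow ends at $k$ at all), so injectivity of $f$ is essentially the condition being characterized, not a free hypothesis. Once these two points are corrected the computation does close up and also yields (c) in the way you intend: in the good case $f$ is injective, so $T^-_k$ is isomorphic in the derived category to the module $\operatorname{coker}f\oplus\bigl(\bigoplus_{i\ne k}P_i\bigr)$, which is the Brenner--Butler tilting module at $k$.
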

\begin{proof}
Follows from~\cite[Prop.~2.3]{Ladkani10}.
\end{proof}

Let $(Q,W)$ be a QP arising from a triangulation of a bordered
unpunctured oriented surface, so that its Jacobian algebra is gentle.
The next proposition characterizes the vertices $k$ for which mutations
are good in terms of their neighborhoods. The \emph{neighborhood} of a
vertex $k$ is the full subquiver on the set consisting of $k$ and all
vertices which are targets of arrows starting at $k$ or sources of
arrows ending at $k$. By a \emph{triangle} in a quiver arising from a
triangulation we mean an oriented $3$-cycle bound by radical square
zero relations. Such triangles are in bijection with the internal
triangles of the triangulation.

\begin{prop}
\label{p:goodmut} Let $(Q,W)$ be a QP arising from a triangulation of a
bordered unpunctured oriented surface. Then:
\begin{enumerate}
\renewcommand{\theenumi}{\alph{enumi}}
\item
The mutation of $(Q,W)$ at a vertex is good if and only if its
neighborhood is not one of the following
\begin{align*}
\xymatrix@=1pc{
& {\bullet} \ar[dr] \\
{\circ} \ar[ur] & & {\circ}
}
& &
\xymatrix@=1pc{
& {\bullet} \ar[dl] \\
{\circ} \ar[rr] & & {\circ} \ar[ul]
}
& &
\xymatrix@=1pc{
& {\bullet} \ar[dr] \\
{\circ} \ar[rr] \ar[ur] & & {\circ}
}
& &
\xymatrix@=1pc{
& {\bullet} \ar[dl] \\
{\circ} \ar@<-0.5ex>[rr] \ar@<0.5ex>[rr] & & {\circ} \ar[ul]
}
\end{align*}
where $\bullet$ denotes the vertex.

\item
If the mutation of $(Q,W)$ at a vertex $k$ is good and both the
negative and positive algebra mutations of $\cP(Q,W)$ at $k$ are
defined, then they are isomorphic.

\item
A good mutation preserves the numbers of arrows and triangles in the
quiver.
\end{enumerate}
\end{prop}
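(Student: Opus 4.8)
The plan is to prove all three parts at once by a single finite case analysis on the neighborhood of $k$, which is made possible by the rigidity of gentle algebras coming from triangulations. Since $\cP(Q,W)$ is gentle with potential a sum of triangles, every arrow lies in at most one oriented $3$-cycle, and the only relations are the radical-square-zero relations inside each triangle. The arc corresponding to $k$ is a side of at most two triangles of the triangulation, and each such triangle contributes to $k$ at most one incoming and one outgoing arrow, according to whether the two neighboring sides of that triangle are internal arcs or boundary segments. Combining the two triangles at $k$ produces a short explicit list of admissible neighborhoods, in which the four displayed quivers occur; I would begin by writing this list down.

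For each neighborhood I would next read off, using Proposition~\ref{p:algmut}, which of $\mu^-_k$ and $\mu^+_k$ is defined. The criterion is combinatorial: an outgoing arrow $k \to j$ is the start of a maximal non-zero path precisely when it cannot be preceded, i.e.\ when $i \to k \to j = 0$ for every incoming arrow $i \to k$; since the relations are exactly the radical-square-zero relations within triangles, this happens iff every incoming arrow at $k$ shares a triangle with $k \to j$. Hence $\mu^-_k$ is undefined exactly when some outgoing arrow at $k$ shares its triangle with every incoming arrow at $k$, and dually for $\mu^+_k$; both are therefore defined precisely when each outgoing arrow and each incoming arrow can be paired with an arrow of the opposite direction lying in the other triangle at $k$.

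I would then compute, for each neighborhood, the quiver-with-potential mutation $\mu_k(Q,W)$ by the Derksen-Weyman-Zelevinsky rules -- adjoining the composite arrows $[\beta\alpha]$ for paths $i \to k \to j$, reversing the arrows incident to $k$, updating the potential by the standard cubic term, and finally cancelling $2$-cycles with the accompanying reduction -- and compare the resulting gentle Jacobian algebra $\cP(\mu_k(Q,W))$ with the endomorphism algebras $\mu^-_k(\gL)$ and $\mu^+_k(\gL)$ (equivalently, with the Brenner-Butler tilts furnished by Proposition~\ref{p:algmut}(c),(d)). Because both constructions alter only the arrows incident to $k$ together with the $2$-cycles through $k$, the comparison is local and the neighborhood alone determines the outcome. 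This matching shows that the mutation is good for every neighborhood except the four listed: for the two in which $k$ lies inside a full triangle, both algebra mutations are undefined, so goodness is impossible, while for the remaining two (where the incoming and outgoing arrows at $k$ sit in distinct triangles) both algebra mutations are defined but the reduced $\mu_k(Q,W)$ either creates a triangle together with an extra arrow or destroys one, so that its Jacobian algebra agrees with neither $\mu^-_k(\gL)$ nor $\mu^+_k(\gL)$. This establishes (a), and reading the arrow and triangle counts directly off the explicit good-case pictures -- where they are visibly unchanged -- yields (c). For (b), a glance at the list shows that both algebra mutations are defined and the mutation is good only when $k$ borders two full triangles; there I would exhibit $\mu^-_k(\gL)$, $\mu^+_k(\gL)$ and $\cP(\mu_k(Q,W))$ explicitly and observe that all three coincide.

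The step I expect to be the main obstacle is the Derksen-Weyman-Zelevinsky side of the comparison, specifically the cancellation of $2$-cycles and the induced reduction of the potential: one must check that the reduced potential is again a sum of triangles and identify the reduced quiver with potential precisely. It is exactly here -- in whether a triangle or an arrow is gained or lost during reduction -- that the four exceptional neighborhoods are separated from the good ones and that part (c) is secured, so the bookkeeping through the reduction step is the delicate point.
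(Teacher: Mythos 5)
Your overall architecture---reduce everything to a finite case analysis of the possible neighborhoods of the mutated vertex, which is legitimate because the relations of these gentle Jacobian algebras are length-two and confined to triangles---is the same as the paper's, and your reading of when $\mu^\pm_k$ is defined via maximal non-zero paths matches Proposition~\ref{p:algmut}. Where you genuinely diverge is in how goodness is certified in the non-excluded cases. You propose to compute $\cP(\mu_k(Q,W))$ by the Derksen--Weyman--Zelevinsky rules and, separately, to compute the endomorphism algebras $\End T^\pm_k(\gL)$ explicitly, then compare. The paper deliberately avoids this comparison: it invokes criteria from \cite[\S 5]{Ladkani10}, built on $\cD$-split sequences and on the realization (via Amiot and Buan--Iyama--Reiten--Smith) of $\cP(Q,W)$ and $\cP(\mu_k(Q,W))$ as endomorphism algebras of neighboring cluster-tilting objects in a 2-Calabi--Yau category, so that goodness can be decided from the \emph{definedness data alone}---exactly the data your second paragraph extracts---without ever computing a tilted endomorphism ring. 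Your route is more elementary and self-contained; the paper's buys a much shorter verification (a computer check of which algebra mutations are defined at each neighborhood).

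The one step you should not treat as routine is the locality claim that ``the neighborhood alone determines the outcome'' of the comparison with $\End T^\pm_k(\gL)$. The quiver of $\End T^\pm_k(\gL)$ is read off from $\mathrm{rad}/\mathrm{rad}^2$ of an endomorphism ring computed in the homotopy category, and whether a map $P_i\to P_j$ becomes irreducible or acquires new relations depends on factorizations through the two-term complex at $k$; since maximal non-zero paths through $k$ can extend well beyond the neighborhood, one must argue (using gentleness and the fact that all relations sit in triangles) that these factorizations are nevertheless controlled by the neighborhood. You locate the delicate point in the DWZ reduction of the potential, but for potentials that are sums of $3$-cycles that reduction is the easy half; the half that the paper's machinery is designed to bypass is precisely the tilted endomorphism-ring computation, and your proof would need to carry it out (or justify its locality) in each good case. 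With that supplied, your argument goes through; note also that for the two excluded neighborhoods where both algebra mutations \emph{are} defined, the quickest way to rule out agreement with $\cP(\mu_k(Q,W))$ is the Cartan determinant $2^t$ argument of Proposition~\ref{p:badmut}, since $t$ changes by one, rather than a direct comparison of algebras.
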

\begin{proof}

To assess whether a mutation is good or not, \emph{a-priori} one needs
to check that an algebra mutation of $\cP(Q,W)$ at $k$ is defined as
well as to verify that it is isomorphic to $\cP(\mu_k(Q,W))$. Compared
to the former, the latter verification is much harder, so we have
developed criteria and algorithms to test for good mutations using only
the data whether the relevant algebra mutations are defined or
not~\cite[\S 5]{Ladkani10}, building on the notion of $\cD$-split
sequences of Hu and Xi~\cite{HuXi08}.

Note that these criteria were formulated for cluster-tilting objects in
2-Calabi-Yau categories, but by~\cite{Amiot09} and~\cite{BIRSm08}, the
Jacobian algebras $\cP(Q,W)$ and $\cP(\mu_k(Q,W))$ can be realized as
endomorphism algebras of neighboring cluster-tilting objects in a
2-Calabi-Yau triangulated category.

In the case of QP arising from triangulations of unpunctured bordered
surfaces, the only relations lie in oriented $3$-cycles, so in order to
determine whether an algebra mutation is defined at some vertex, it is
enough to consider its neighborhood. As the number of possible
neighborhoods is finite, they can be checked on a computer. Examples of
similar checks can be seen in~\cite[\S 3]{BHL10}.
\end{proof}

\begin{prop}
\label{p:badmut} Let $(Q,W)$ be a QP arising from a triangulation of a
bordered unpunctured oriented surface. If the mutation at the vertex
$k$ is not good, then the Jacobian algebras $\cP(Q,W)$ and
$\cP(\mu_k(Q,W))$ are not derived equivalent.
\end{prop}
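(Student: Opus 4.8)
The plan is to show that a mutation which is not good necessarily alters a numerical derived invariant, namely the number $t$ of triangles in the quiver. By Proposition~\ref{p:goodmut}(a), the hypothesis that the mutation at $k$ is not good means precisely that the neighborhood of $k$ is one of the four listed configurations; in each of them $k$ has exactly one incoming arrow and exactly one outgoing arrow. Since $\mu_k(Q,W)$ again arises from a triangulation, namely that of the flipped triangulation~\cite{Labardini09}, both $\cP(Q,W)$ and $\cP(\mu_k(Q,W))$ are gentle algebras of the type considered here, so it suffices to produce a derived invariant separating them.

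The invariant I would use is $t$ itself. For a gentle algebra coming from an unpunctured triangulation the triangles are pairwise arrow-disjoint, since each arrow corresponds to two sides of a single triangle; consequently the computation underlying Theorem~\ref{t:gb}(c) shows that the determinant of its Cartan matrix equals $\pm 2^{t}$, and that the coefficient of $(0,3)$ in its Avella--Alaminos--Geiss invariant~\cite{AvellaAlaminosGeiss08} counts the triangles. The Cartan matrices of derived equivalent finite-dimensional algebras are congruent over $\mathbb{Z}$, so $\det C$ is a derived invariant; the Avella--Alaminos--Geiss invariant is itself a derived invariant of gentle algebras. Hence $t$ is a derived invariant of the algebras in question, and it is enough to prove that $t$ changes under $\mu_k$ in each of the four cases.

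This I would verify by a direct computation of the quiver mutation together with the $2$-cycle reduction of~\cite{DWZ08}, using that the potential is a sum of the triangles. Writing $a\colon u \to k$ and $b\colon k \to v$ for the unique arrows at $k$, the mutation reverses them to $a^{*}, b^{*}$ and introduces an arrow $[ba]\colon u \to v$ together with the cubic term $[ba]\,b^{*}a^{*}$. In the first and third configurations there is no arrow $v \to u$, so no $2$-cycle is created, the arrow $[ba]$ closes a new triangle through $k$, and $t$ increases by one. In the second configuration the single arrow $v \to u$ forms a $2$-cycle with $[ba]$ which is cancelled in reduction, destroying the triangle through $k$ and decreasing $t$ by one. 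The fourth configuration, with a double arrow from $v$ to $u$, is the delicate one, and I expect it to be the main obstacle: here $[ba]$ forms a $2$-cycle with precisely the one of the two arrows that lies in the triangle through $k$, and one must check that the reduction eliminates that triangle, effectively replacing the cancelled arrow by the length-two path through $k$, without creating any new triangle, so that again $t$ drops by one. In all four cases $t$ changes, whence $\det C$ changes by a factor of $2$ and the two Jacobian algebras cannot be derived equivalent.
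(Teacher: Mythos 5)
Your proposal is correct and follows essentially the same route as the paper: the paper's proof likewise uses Holm's computation that the Cartan determinant of such a gentle Jacobian algebra is $2^{t}$, together with the observation that a mutation which is not good changes the number $t$ of triangles by $1$. Your explicit case-by-case verification of the latter observation (including the $2$-cycle reduction in the fourth configuration) is a sound elaboration of what the paper leaves as an observation, not a different argument.
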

\begin{proof}
The determinant of the Cartan matrix of such gentle Jacobian algebra is
$2^t$ where $t$ denotes the number of $3$-cycles with radical square
zero relations, as computed by Holm~\cite{Holm05}. The result now
follows by observing that mutations which are not good change the
number of triangles in the quiver by $1$.
\end{proof}

These results allow for a description of the derived equivalence
classes of the gentle algebras arising from triangulations of a given
marked unpunctured surface in terms of the properties of the
corresponding triangulations, generalizing the derived equivalence
classifications of cluster-tilted algebras of Dynkin type
$A$~\cite{BuanVatne08} and affine type $\tilde{A}$~\cite{Bastian09}.
This is a subject of further investigations.

\subsection{A necessary condition for derived equivalence}
\label{ssec:necessary}

Let $S$ be a surface of genus $g \geq 0$ with $b \geq 1$ boundary
components. Let $M$ be a set of marked points on the boundary, with at
least one point on each component.

\begin{lemma}
\label{l:triangE} Assume that $(g,b) \neq (0,1)$. Then there exists a
triangulation of $(S,M)$ containing, for any boundary component $C$
with marked points $A_1, A_2, \dots, A_m$, two distinct arcs $i$ and
$j$ starting at $A_1$ and $A_m$ respectively having a common endpoint
$E$ (which might be on $C$) as in the following picture.
\[
\xymatrix@=1pc{
& & & {\cdot_{A_m}} \ar@{-}@/_2pc/[dd] \\
{_E\cdot} \ar@{-}@/_/[drrr]_i \ar@{-}@/^/[urrr]^j & & & C
& {\cdot_{A_2}} \ar@{.}@/_/[ul] \\
& & & {_{A_1}\cdot} \ar@{-}@/_/[ur]
}
\]
\end{lemma}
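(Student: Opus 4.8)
The plan is to construct the triangulation directly rather than to start from an arbitrary one, engineering the fan configuration at each boundary component from the outset, and to organize the construction as an induction on the number $b$ of boundary components. First I would dispense with intermediate marked points. The picture constrains only the two extreme points $A_1, A_m$ and the outer arcs $i = EA_1$, $j = EA_m$ of a fan spanning the boundary arc $A_1 A_2 \cdots A_m$; so if the fan, with apex $E$ and triangles $EA_s A_{s+1}$, is present for the reduced configuration in which the intermediate points $A_2, \dots, A_{m-1}$ have been erased, then re-inserting each such point on its boundary segment and adding the single arc joining it to $E$ refines the fan without disturbing $i$ or $j$. Hence I may assume each component carries the minimal number of marked points.

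For the core existence statement I would induct on $b$. When $b \geq 2$, I choose two boundary components $C$ and $C'$, pick a marked point $E$ on $C'$, and draw the arcs $EA_1, \dots, EA_m$ from $E$ to all marked points of $C$; the outer arcs $i = EA_1$, $j = EA_m$ realize the desired configuration at $C$. Excising the resulting open fan, a disc glued to the surface along $i$, along $j$, and along the spanned boundary arc of $C$, merges $C$ and $C'$ into a single boundary component $\hat C$ and yields a surface of the same genus with $b-1$ boundary components. I would then invoke the inductive hypothesis on this smaller surface, with its marked points chosen so that $E$ remains extreme on $\hat C$, obtaining fans at all of its components, and reglue the excised fan. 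The fan at $C$ is the one just added, while the fan at $C'$ is recovered from the fan at $\hat C$, the cut having been arranged so that the configuration at $\hat C$ restricts to the required one at $C'$. Iterating reduces everything to the base cases $(0,2)$ and $(g,1)$ with $g \geq 1$, where I would exhibit explicit triangulations: for the annulus by joining the two boundary circles with suitable arcs, and for the one-holed genus-$g$ surface by closing the fan through the handles.

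The hard part is exactly this last base case, $b=1$ with $g \geq 1$, together with the degenerate instance $m=1$ in which $C$ carries a single marked point and both arcs $i, j$ must issue from that point and meet at a common $E$. Here the apex cannot be hosted on a second component, so $E$ must lie on $C$ itself and the arcs $i, j$ are forced to travel through the handles; producing a pair of non-isotopic arcs that bound a fan is the technical crux, and it is precisely where positive genus is used. This is also why the hypothesis $(g,b) \neq (0,1)$ is indispensable: the disc offers neither a second boundary component to carry the apex nor any handle through which to route the arcs, and indeed admits no triangulation at all when it has fewer than three marked points, so the construction has no base case there. Finally, I would need to verify the bookkeeping in the inductive step, namely that excising and regluing one fan leaves the fan configuration intact simultaneously at every other boundary component; this is routine but must be done carefully so that the chosen boundary triangles do not interfere with one another.
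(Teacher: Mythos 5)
Your reduction to the case where every boundary component carries a single marked point is correct, and it is in fact the paper's own argument read backwards: the paper inducts on $|M|$, and its inductive step adds one new point $A_{m+1}$ at a time and joins it to $E$ by a single new arc, which is exactly your re-insertion of intermediate points into the fan. The genuine gap is what happens after that reduction. The paper's proof of the minimal case is a direct exhibition of triangulations (Section~\ref{sec:quivers}: the fundamental $4g$-gon with the hole placed near the identified vertex and a pair of arcs $x,y$ encircling it when $b=1$, and a chain of holes joined by pairs of parallel arcs when $b>1$), and this exhibition is the actual content of the lemma. You correctly identify the one-holed genus-$g$ surface as the crux, but ``closing the fan through the handles'' is not a construction; nothing in your argument produces the two non-isotopic arcs from the unique marked point to itself that cobound the non-internal triangle around the hole. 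Until that is supplied, your induction has no base case.

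The auxiliary induction on $b$ that you interpose is also not sound as written. The fan meets the component $C'$ only at its apex $E$, so deleting the open fan together with the spanned boundary arc leaves a space in which two sectors of the half-disc neighbourhood of $E$ meet only at the point $E$ --- a wedge, not a surface. The correct surgery is to cut along the arc $i$, which doubles $E$ and $A_1$ and merges $C$ and $C'$; but then the merged component carries at least the marked points $E$, $A_1$ and their doubles in addition to those of $C'$, so you leave the minimal regime you have just reduced to, and the fan supplied by the inductive hypothesis on the merged component has an apex and extreme arcs that you do not control, so there is no reason it should descend to the required configuration at $C'$ after regluing. Repairing this would require a strengthened inductive statement in which the apex of each fan can be prescribed. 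Since the explicit minimal triangulations are needed anyway for $(g,1)$, the induction on $b$ buys nothing and is best replaced by the paper's direct construction.
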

\begin{proof}
By induction on the number of marked points in $M$. When this number is
minimal, that is, $M$ contains exactly one point from each component,
such triangulations can be explicitly constructed, see
Section~\ref{sec:quivers}.

Suppose we have constructed such triangulation for $M$ and let $M' = M
\cup \{A_{m+1}\}$ where $A_{m+1}$ is a new marked point on a boundary
component $C$. Then we obtain a triangulation for $M'$ with the
required property by adding the arc $k$ connecting $E$ and $A_{m+1}$ as
in the following picture.
\begin{equation}
\label{e:ijk}
\xymatrix@=1pc{
& & & {\cdot_{A_m}} \ar@{-}@/_/[dl] \\
{_E\cdot} \ar@{-}@/_/[drrr]_i \ar@{-}@/^/[urrr]^j \ar@{-}[rr]^k & &
{\cdot_{A_{m+1}}} \ar@{-}@/_/[dr] & C & {\cdot_{A_2}} \ar@{.}@/_/[ul] \\
& & & {_{A_1}\cdot} \ar@{-}@/_/[ur]
}
\end{equation}
\end{proof}

\begin{prop} \label{p:SMd2}
Assume that $(g,b) \neq (0,1)$. If the mutation class of the QP arising
from the triangulations of $(S,M)$ satisfies condition $(\delta_2)$,
then $M$ contains exactly one point from each boundary component of
$S$.
\end{prop}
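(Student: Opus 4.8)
The plan is to prove the contrapositive. Since every boundary component is assumed to carry at least one marked point, the negation of the conclusion is that some component $C$ carries at least two marked points, and I want to deduce that $(\delta_2)$ fails. The whole idea is to produce, inside the mutation class of $(S,M)$, a single flip that strictly changes the number of internal triangles: by Proposition~\ref{p:goodmut}(c) such a flip cannot be a good mutation, and then Proposition~\ref{p:badmut} immediately gives two Jacobian algebras in the class that are not derived equivalent.

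First I would invoke Lemma~\ref{l:triangE} to fix a triangulation $T$ of $(S,M)$ containing the two arcs $i$ and $j$ that join the first and last marked points $A_1,A_m$ of $C$ to a common vertex $E$. Because $C$ carries at least two points, the inductive construction in the proof of that lemma has, at its last step on $C$, produced the local configuration of \eqref{e:ijk}: there is an arc $k$ from $E$ to a marked point $A_{m+1}$ lying between $A_m$ and $A_1$ on $C$, and $k$ bounds the two triangles $(E,A_m,A_{m+1})$ and $(E,A_{m+1},A_1)$. Each of these has a side on $\partial S$ (the boundary segments $A_mA_{m+1}$ and $A_{m+1}A_1$), so neither is internal.

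The key step is to flip $k$. The quadrilateral with vertices $E,A_m,A_{m+1},A_1$ has $k=EA_{m+1}$ as one diagonal, and flipping replaces it by the arc $A_mA_1$, which is not isotopic into the boundary precisely because $A_{m+1}$ separates $A_m$ from $A_1$. The two resulting triangles are $(A_m,A_{m+1},A_1)$, which has two boundary sides and is therefore not internal, and $(E,A_m,A_1)$, whose three sides $j$, $i$ and $A_mA_1$ are all arcs, so it \emph{is} internal. Since a flip changes only the two triangles adjacent to the flipped arc, the net effect is to raise the number of internal triangles by exactly one. As flips of triangulations of an unpunctured surface coincide with the QP-mutations of the associated quivers with potentials, both $\cP(T)$ and the algebra of the flipped triangulation lie in the mutation class; by Proposition~\ref{p:goodmut}(c) the mutation at $k$ is not good, and Proposition~\ref{p:badmut} then yields that these two Jacobian algebras are not derived equivalent, contradicting $(\delta_2)$.

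The step I expect to demand the most care is the degenerate case in which $C$ carries exactly two points, for then $A_m=A_1$ and the flipped arc $A_mA_1$ is a loop based at that point. Here one must check that this loop is an admissible non-boundary arc, that the resulting algebra is still gentle so that the evaluation of the Cartan determinant as $2^t$ still applies, and that the triangle $(E,A_1,A_1)$ bounded by $i$, $j$ and the loop is genuinely internal, so that the internal-triangle count still increases by one. Granting this verification, the argument closes uniformly, and the contrapositive shows that $(\delta_2)$ forces each boundary component of $S$ to contain exactly one marked point.
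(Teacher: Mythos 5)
Your proof is correct and follows the same skeleton as the paper's: apply Lemma~\ref{l:triangE} together with the inductive step of its proof to produce the configuration~\eqref{e:ijk}, mutate at the arc $k$, and conclude via Proposition~\ref{p:badmut}. The one place where you diverge is in certifying that the mutation at $k$ is not good. The paper reads off the neighborhood of $k$ in the quiver (it is $\bullet_i \to \bullet_k \to \bullet_j$, possibly with an additional arrow $\bullet_i \to \bullet_j$) and matches it against the forbidden list in Proposition~\ref{p:goodmut}(a), whereas you analyse the flip geometrically, show that it raises the number of internal triangles by exactly one, and invoke Proposition~\ref{p:goodmut}(c). Both routes are valid. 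The paper's is purely quiver-theoretic and uniform, with no degenerate case to handle; yours makes the mechanism behind Proposition~\ref{p:badmut} transparent (the Cartan determinant $2^t$ is forced to change), at the cost of the extra verification you flag when $C$ carries exactly two marked points and the flipped arc becomes a loop based at $A_1=A_m$. That degenerate case does close as you expect: the loop together with $i$ and $j$ bounds an internal triangle, the loop together with the two boundary segments bounds a non-internal one, and the Jacobian algebra of the flipped triangulation is still gentle since this holds for every triangulation of an unpunctured marked surface by~\cite{ABCP10}, so the Cartan-determinant comparison goes through.
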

\begin{proof}
Assume that $M$ has $m+1$ points $A_1, \dots, A_m, A_{m+1}$ on a
boundary component $C$ for some $m \geq 1$. By applying
Lemma~\ref{l:triangE} for $M \setminus \{A_{m+1}\}$ and then the
inductive step in its proof, we get a triangulation having the arcs $i,
j, k$ as in~\eqref{e:ijk}. Let $(Q,W)$ denote the quiver with potential
corresponding to this triangulation. Then the neighborhood of $k$ in
$Q$ is one of
\[
\xymatrix@=1pc{
& {\bullet_k} \ar[dr] \\
{\bullet_i} \ar[ur] & & {\bullet_j}
}
\quad \text{or}
\quad
\xymatrix@=1pc{
& {\bullet_k} \ar[dr] \\
{\bullet_i} \ar[ur] \ar[rr] & & {\bullet_j}
}
\]
hence by Proposition~\ref{p:goodmut} the mutation of $(Q,W)$ at $k$ is
not good. By Proposition~\ref{p:badmut}, the Jacobian algebras
$\cP(Q,W)$ and $\cP(\mu_k(Q,W))$ are not derived equivalent.
\end{proof}

\begin{remark} \label{rem:An}
If $(g,b)=(0,1)$, that is, $S$ is a disc, then $M$ has at least $4$
points on its boundary. Denoting the number of points by $n+3$ for some
$n \geq 1$, it is well known that the quivers arising from the
triangulations of $(S,M)$ are precisely those in the mutation class of
the Dynkin quiver $A_n$. Such class will satisfy condition $(\delta_2)$
only for $n=1,2$.
\end{remark}

\subsection{Sufficiency and further properties}

We fix $(g,b) \neq (0,1)$. Recall that $\cQ_{g,b}$ denotes the mutation
class consisting of the QP arising from triangulations of a marked
surface of genus $g$ with $b$ boundary components and a marked point on
each component, and $\cT_{g,b}$ denotes the class of the corresponding
(gentle) Jacobian algebras.

\begin{lemma} \label{l:Qnet}
A quiver in $\cQ_{g,b}$ has $n$ vertices, $e$ arrows and $t$ triangles,
where
\begin{align*}
n = 6(g-1)+4b &,& e=12(g-1)+7b &,& t=4(g-1)+2b .
\end{align*}
\end{lemma}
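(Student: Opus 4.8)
The plan is to read the three numbers off an arbitrary triangulation $\Delta$ of $(S,M)$. By Fomin--Shapiro--Thurston every quiver in $\cQ_{g,b}$ arises from such a triangulation (flips of $\Delta$ correspond to quiver mutations and preserve the surface), its vertices are the arcs of $\Delta$, its arrows are produced by the triangles of $\Delta$, and the quiver triangles counted by $t$ are exactly the \emph{internal} triangles, i.e.\ those all of whose sides are arcs. Since $M$ has exactly one point on each of the $b$ boundary components, the set of vertices of $\Delta$ has size $b$ and each boundary component is a single boundary segment --- a loop at its marked point --- so $\Delta$ has exactly $b$ boundary segments. I would then count, in order: the number of arcs $n$, the total number of triangles $F$, the number of internal triangles $t$, and finally the number of arrows $e$.

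First I would compute $n$ and $F$ by regarding $\Delta$ as a CW structure on $S$ whose $0$-cells are the $b$ marked points, whose $1$-cells are the $n$ arcs together with the $b$ boundary segments, and whose $2$-cells are the $F$ triangles. Euler's formula gives
\[
b - (n + b) + F = \chi(S) = 2 - 2g - b ,
\]
while counting side--triangle incidences (each triangle has three sides, each arc lies on two triangles, each boundary segment on exactly one) gives $3F = 2n + b$. Solving these two relations yields $n = 6(g-1) + 4b$ and $F = 4(g-1) + 3b$; both are topological invariants, independent of the chosen $\Delta$.

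The key remaining step is to separate the internal triangles from those meeting the boundary, and here I would show that no triangle of $\Delta$ has two boundary sides. Two boundary sides of a triangle would be two distinct boundary loops sharing a corner of the triangle; since each boundary segment is a loop at the single marked point of its component, the shared corner forces the two loops to sit at the same marked point, hence on the same boundary component, hence to be the same segment --- a contradiction, as a boundary segment occurs as the side of exactly one triangle. Consequently each of the $b$ boundary segments lies on its own triangle, so exactly $b$ triangles meet the boundary and $t = F - b = 4(g-1) + 2b$; in particular $t$ does not depend on $\Delta$. Finally, an internal triangle contributes the three arrows of a $3$-cycle while a triangle with a single boundary side contributes one arrow (between its two arc-sides), giving $e = 3t + b = 12(g-1) + 7b$. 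The only non-formal point is the ``at most one boundary side'' claim, which is precisely where the hypothesis of one marked point per boundary component enters; everything else is Euler characteristic and incidence bookkeeping.
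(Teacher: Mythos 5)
Your argument is correct and follows essentially the same route as the paper: both proofs rest on the observations that there are exactly $b$ non-internal triangles (one per boundary component, since each component is a single boundary segment), on the double count of triangle--side incidences (your $3F=2n+b$ is the paper's $2n=3t+2b$ after substituting $F=t+b$), and on the arrow count $e=3t+b$. The only difference is that you derive $n$ self-containedly from the Euler characteristic, where the paper simply cites Fomin--Shapiro--Thurston for the number of arcs, and you spell out the ``no triangle has two boundary sides'' step that the paper asserts without comment.
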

\begin{proof}
The claim on the number of vertices follows from~\cite{FST08}. Since
each boundary component contains exactly one marked point, any
triangulation consists of $t$ internal triangles and $b$ non-internal
triangles, one for each boundary component (which becomes one of its
sides).

Fix a triangulation corresponding to the quiver. We count in two ways
the pairs $(\gamma,\Delta)$ where $\Delta$ a triangle and $\gamma$ is
an arc which is one of its sides. On the one hand, for every internal
triangle $\Delta$ there are $3$ such arcs $\gamma$ and for every
non-internal one there are $2$ such arcs, giving us a total of $3t+2b$
pairs. On the other hand, each arc is a side of exactly two triangles
so that the total number of such pairs is $2n$. From the equality
$2n=3t+2b$ we deduce the formula for $t$.

Finally, each internal triangle gives rise to a $3$-cycle in the quiver
and hence to three arrows whereas each non-internal one gives rise to
one arrow. Thus $e = 3t+b$ and we get the formula for $e$ as well.
\end{proof}

For a gentle algebra $\gL$, denote its Avella-Alaminos-Geiss derived
invariant~\cite{AvellaAlaminosGeiss08} by $\phi(\gL)$.

\begin{lemma} \label{l:AG}
Let $\gL \in \cT_{g,b}$. Then $\phi(\gL) = b(1,1)+t(0,3)$ where
$t=4(g-1)+2b$.
\end{lemma}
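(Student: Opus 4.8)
The plan is to compute $\phi(\gL)$ directly from the combinatorics of the permitted and forbidden threads of the gentle algebra $\gL$, which for a QP arising from a triangulation can be read off the surface. Recall that the Avella--Alaminos--Geiss algorithm produces its pairs in two ways: each oriented cycle all of whose length-$2$ subpaths are relations (a cyclic forbidden thread of length $\ell$) contributes a pair $(0,\ell)$, while the remaining threads are organized into closed walks that alternate between maximal non-zero paths (permitted threads) and maximal zero paths (forbidden threads), a walk built from $n$ permitted and $n$ forbidden threads whose forbidden parts have total length $m$ contributing the pair $(n,m)$. The only relations of $\gL$ are the length-$2$ subpaths inside the internal triangles, each of which is an oriented $3$-cycle with radical square zero, so the whole computation is controlled by the triangles and by the way the arcs meet at the marked points.

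First I would treat the cyclic contributions. Since the potential is chosen so that every arrow lies in at most one oriented $3$-cycle, the cyclic forbidden threads are exactly the $t$ internal triangles, each of length $3$; they contribute precisely $t$ copies of the pair $(0,3)$. There are no cyclic permitted threads at all: a cyclic non-zero path would yield infinitely many non-zero paths, contradicting the finite-dimensionality of $\gL$ (condition $(\delta_1)$). Hence no pair of the form $(\ell,0)$ with $\ell>0$ occurs, and the entire contribution $t(0,3)$ is accounted for by the triangles.

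Next I would analyze the alternating walks. The arrows not lying in an internal triangle are exactly the $b$ arrows coming from the non-internal (boundary) triangles, one for each boundary component; as no relation involves such an arrow, each is a maximal forbidden thread of length $1$, and all remaining forbidden threads are trivial. Fixing a boundary component $C$ with its unique marked point $A$, the arcs incident to $A$ form a single fan, and I would show that the alternating walk passing through the length-$1$ forbidden thread of $C$ closes up after exactly one permitted thread (wrapping the fan at $A$) together with this one forbidden thread, producing the pair $(1,1)$. Running over all $b$ boundary components yields $b$ copies of $(1,1)$, and together with the triangle contributions this gives $\phi(\gL)=b(1,1)+t(0,3)$.

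The main obstacle is the global bookkeeping in the last step: one must verify that the alternating walks close up exactly as claimed, so that each boundary component contributes a single walk of type $(1,1)$ with no threads left over and no walk of any other shape $(n,m)$. This is precisely where the hypothesis of a single marked point on each boundary component is used --- with additional marked points the walks lengthen and other pairs appear, in line with Proposition~\ref{p:SMd2}. I would settle it by recording, from the arc combinatorics at each marked point, the explicit adjacency between permitted and forbidden threads that the algorithm follows; alternatively, since $\phi$ is a derived invariant and all the algebras in $\cT_{g,b}$ are derived equivalent, it suffices to carry out this verification for the single explicit triangulation constructed in Section~\ref{sec:quivers}, for which the thread structure is transparent.
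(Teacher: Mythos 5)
Your primary argument is essentially the paper's proof: for each boundary component the fan of arcs $i_1,\dots,i_s$ at its unique marked point yields the maximal non-zero path $i_1\to i_2\to\cdots\to i_s$, the arrow $i_1\to i_s$ coming from the non-internal triangle is a maximal zero path of length one, so the alternating walk closes up as a $(1,1)$-pair, while each internal triangle is a radical square zero $3$-cycle contributing $(0,3)$. One caution about your proposed fallback: checking only the single explicit triangulation of Section~\ref{sec:quivers} and then invoking derived invariance of $\phi$ together with the derived equivalence of all algebras in $\cT_{g,b}$ would be circular, since that derived equivalence (condition $(\delta_2)$, established in Theorem~\ref{t:gb}(a)) is itself deduced from this lemma via Proposition~\ref{p:AGbt} and Proposition~\ref{p:goodmut}, so you must carry out the direct verification at each marked point for an arbitrary triangulation.
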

\begin{proof}
Let $C$ be a boundary component and $A$ the marked point on $C$. In the
triangulation corresponding to $\gL$, let $i_1, i_2, \dots, i_s$ denote
the arcs passing through $A$ traversed in an anti-clockwise order as in
the following picture.
\[
\xymatrix@=1pc{
& & & & \\
{_E\cdot} \ar@{-}@/^2pc/[rrr]_{i_s} \ar@{-}@/_2pc/[rrr]^{i_1} & & {C} &
{\cdot_A} \ar@{-}@(ul,dl)[] \ar@{-}@/_/[dr]_{i_2}
\ar@{-}@/_/[ur]^{i_{s-1}} & {\vdots} \\
& & & &
}
\]

Then in the quiver with relations of $\gL$ the path $i_1 \to i_2 \to
\dots \to i_s$ is a maximal non-zero path whereas the arrow $i_1 \to
i_s$ induced by the non-internal triangle containing the arcs $i_1$,
$i_s$ is a maximal zero path.

Thus, each component $C$ contributes $(1,1)$ to $\phi(\gL)$. In
addition, each internal triangle in the triangulation yields an
oriented $3$-cycle with radical square zero relations, thus contributes
$(0,3)$ to $\phi(\gL)$.
\end{proof}

\begin{prop} \label{p:AGbt}
Let $\gL$ be a gentle algebra with $\phi(\gL) = b(1,1)+t(0,3)$ for some
$b>0$ and $t \geq 0$. Then:
\begin{enumerate}
\renewcommand{\theenumi}{\alph{enumi}}
\item \label{it:AGsurface}
All zero-relations lie in radical square zero oriented $3$-cycles.

\item \label{it:AGngbr}
For any vertex $v$, the subquiver formed by $v$ and the arrows incident
to $v$ is not one of the following:
\begin{align*}
\xymatrix@=1pc{
& {\bullet_v} \\
{\bullet} \ar[ur]^{\alpha}
}
& &
\xymatrix@=1pc{
& {\bullet_v} \ar[dr]^{\beta} \\
& & {\bullet}
}
& &
\xymatrix@=1pc{
& {\bullet_v} \ar[dr]^{\beta} \\
{\bullet} \ar[ur]^{\alpha} & & {\bullet}
}
& &
\xymatrix@=1pc{
& {\bullet_v} \ar[dr]^{\beta} \\
{\bullet} \ar[ur]^{\alpha} \ar@{.}@/^/[rr] & & {\bullet}
}
\end{align*}
\end{enumerate}
\end{prop}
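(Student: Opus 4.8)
The plan is to decode the Avella-Alaminos-Geiss invariant $\phi(\gL) = b(1,1) + t(0,3)$ in terms of the combinatorial structure of the maximal permitted and forbidden threads of the gentle algebra $\gL$, and then to translate each claim into a statement about these threads. Recall that the AG-invariant is computed by pairing maximal non-zero paths (permitted threads) with maximal zero paths (forbidden threads) via an explicit algorithm that produces, for each such pair, a coordinate $(n,m) \in \bN^2$ recording the lengths of the permitted and forbidden parts of the cycle it traverses. Thus $\phi(\gL) = b(1,1) + t(0,3)$ means precisely that $\gL$ has $b$ AG-cycles of type $(1,1)$ and $t$ AG-cycles of type $(0,3)$, and no others. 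My strategy throughout is to exploit the fact that the hypothesis pins down the AG-invariant completely, so that any local configuration forbidden by the statement can be ruled out by showing it would force an extra summand or a summand of the wrong type.

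For part~(\ref{it:AGsurface}), the key observation is that a summand $(0,m)$ in the AG-invariant corresponds to an AG-cycle built entirely from forbidden paths, i.e.\ a closed cycle all of whose consecutive pairs of arrows are zero-relations, and that the second coordinate $m$ equals the length of that cycle. Since every summand of $\phi(\gL)$ is either $(1,1)$ or $(0,3)$, the only purely-forbidden cycles have length exactly $3$, so every maximal chain of zero-relations closes up into an oriented $3$-cycle. I would argue that any zero-relation $\alpha\beta = 0$ (with $\beta$ following $\alpha$) must belong to such a cycle: the AG-algorithm assigns $\alpha\beta$ to some AG-cycle, and since the $(1,1)$-summands contribute a forbidden part of length $1$ while the $(0,3)$-summands contribute forbidden parts organized into $3$-cycles, one checks that the only way for a zero-relation to sit in a length-consistent cycle that also has trivial permitted contribution is inside a radical-square-zero oriented $3$-cycle. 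A short case analysis on whether the relation lies in a $(1,1)$-cycle or a $(0,3)$-cycle completes this part.

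For part~(\ref{it:AGngbr}), I would rule out each of the four forbidden local configurations in turn by showing it would corrupt the AG-invariant. The first two pictures (a vertex $v$ that is a sink, respectively a source, with a single incident arrow) produce a permitted thread terminating or originating at $v$ in a way that the AG-algorithm records as a boundary-type contribution, altering the count of $(1,1)$-summands away from exactly $b$; I would trace the thread through $\alpha$ (resp.\ $\beta$) and show the resulting AG-cycle cannot have the form $(1,1)$ or $(0,3)$. The third and fourth pictures encode the two bad neighborhoods already identified in Proposition~\ref{p:goodmut} (a vertex on a would-be $2$-triangle, with the composite $\alpha\beta$ either nonzero or a zero-relation, indicated by the dotted arc): in the third case $\alpha\beta \neq 0$ gives a permitted thread of length $2$ through $v$ which, being neither part of a pure $3$-cycle nor a boundary $(1,1)$-pattern, forces a summand outside $\{(1,1),(0,3)\}$; in the fourth case the dotted relation $\alpha\beta=0$ together with the extra arrow would produce either a $(0,2)$-cycle or a $(1,2)$-cycle, again contradicting the prescribed invariant.

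The main obstacle will be the bookkeeping in the AG-algorithm itself: the invariant is defined by a somewhat delicate simultaneous traversal of permitted and forbidden threads, and to convert ``configuration $X$ is present'' into ``summand $(n,m) \notin \{(1,1),(0,3)\}$ appears'' one must correctly identify which AG-cycle the relevant arrows fall into and compute both of its coordinates. I expect the cleanest route is to make explicit, as in Lemma~\ref{l:AG}, the dictionary between local quiver features and AG-cycle types --- each boundary $3$-valent vertex giving a $(1,1)$ and each internal oriented $3$-cycle giving a $(0,3)$ --- and then observe that all four forbidden pictures violate the length or valence constraints forced by this dictionary. Once that dictionary is in place, parts~(\ref{it:AGsurface}) and~(\ref{it:AGngbr}) reduce to finite, mechanical checks.
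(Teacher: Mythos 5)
Your proposal follows essentially the same route as the paper's proof: part (a) is handled by observing that a zero-relation forces a forbidden thread of length at least two, which can only occur inside a $(0,3)$-summand, i.e.\ a radical-square-zero oriented $3$-cycle, and part (b) by checking that each of the four local configurations contributes a summand of the Avella-Alaminos--Geiss invariant incompatible with $b(1,1)+t(0,3)$. The one caveat is that the specific summand types you predict for the fourth picture (a $(0,2)$- or $(1,2)$-cycle) are not the ones that actually arise --- the paper asserts, and a direct computation on, say, the radical-square-zero $3$-cycle algebra confirms, that all four configurations force a summand with first coordinate $n>1$ --- but this detail does not affect the validity of the strategy.
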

\begin{proof}
{\ }
\begin{enumerate}
\renewcommand{\theenumi}{\alph{enumi}}
\item
Any zero relation not in a radical square zero oriented cycle would
yield a contribution of some $(n,m)$ with $m>1$ to $\phi(\gL)$. In
addition, a radical square zero oriented cycle of length $m$ yields a
contribution of $(0,m)$.

\item
An inspection of each of the four cases reveals a contribution of
$(n,m)$ with $n>1$ to $\phi(\gL)$.
\end{enumerate}
\end{proof}

\begin{cor} \label{c:AGsurfacegood}
Let $\gL$ be a gentle algebra with $\phi(\gL) = b(1,1)+t(0,3)$ for some
$b>0$ and $t \geq 0$. Then $\gL \simeq \cP(Q,W)$ for some $(Q,W)$
arising from a triangulation of a marked unpunctured surface. Moreover,
all the mutations of $(Q,W)$ are good.
\end{cor}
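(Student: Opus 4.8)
The plan is to prove the two assertions separately: first realize $\gL$ as the Jacobian algebra of an unpunctured surface triangulation, and then deduce that every mutation is good from the local criteria already established. Throughout I use that a gentle algebra has at most two arrows entering and at most two arrows leaving each vertex. I would begin by extracting the local structure from Proposition~\ref{p:AGbt}. Part~(a) tells us that every zero-relation is one of the three length-two subpaths of a radical-square-zero oriented $3$-cycle, so the relations organize $3t$ of the arrows into $t$ such cycles, which I call the \emph{internal triangles}. Part~(b), combined with the degree bound, restricts the pair (in-degree, out-degree) of every vertex to $(2,2)$, $(2,1)$, $(1,2)$, $(2,0)$, $(0,2)$ (besides the isolated type $(0,0)$). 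In particular no vertex is a sink or a source carrying a single arrow, and, crucially for the second assertion, no vertex has exactly one incoming and one outgoing arrow.

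Next I would reconstruct the marked surface, following the correspondence of Assem, Br\"{u}stle, Charbonneau-Jodoin and Plamondon~\cite{ABCP10}. Assign to each vertex of $Q$ an arc, to each of the $t$ oriented $3$-cycles an internal triangle whose three sides are the arcs at its vertices, and to each summand $(1,1)$ of $\phi(\gL)$ a boundary component carrying a single marked point and bounding a triangle whose two non-boundary sides are the arcs joined by the corresponding length-one forbidden thread. Gluing these $t+b$ triangles along shared arcs produces an oriented triangulated surface: the degree bookkeeping forces each arc to be a side of exactly two triangles, since an internal triangle contributes one incoming and one outgoing arrow at each of its arcs while a boundary triangle contributes a single arrow between its two arcs, so that the $3t+2b$ side incidences are exactly $2n$ and the manifold condition holds along each arc. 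That all marked points lie on the boundary, so that the surface is unpunctured, is forced by the fact that each $(1,1)$ produces a boundary component and no summand of $\phi(\gL)$ has a larger entry; this is where the precise form $b(1,1)+t(0,3)$ is essential. The resulting quiver with potential has quiver $Q$ and potential the sum of the $t$ internal triangles, whence $\gL \simeq \cP(Q,W)$, and consistency is confirmed by recomputing the invariant of the reconstructed algebra as $b(1,1)+t(0,3)$ via Lemma~\ref{l:AG}.

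Finally, having realized $\gL = \cP(Q,W)$ on an unpunctured surface, the goodness of every mutation follows by combining Proposition~\ref{p:goodmut} with Proposition~\ref{p:AGbt}. Each of the four bad neighborhoods listed in Proposition~\ref{p:goodmut}(a) has its central, mutated vertex of type $(1,1)$, that is, with exactly one arrow in and one arrow out, whereas the third and fourth forbidden configurations of Proposition~\ref{p:AGbt}(b) rule out any vertex of this type. Hence no neighborhood in $Q$ is bad, and by Proposition~\ref{p:goodmut}(a) every mutation of $(Q,W)$ is good.

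I expect the reconstruction step to be the main obstacle: passing from the purely local data (the admissible degree types and the triangle-shaped relations) to a globally consistent triangulated surface. The delicate points are verifying that every arc is a side of exactly two triangles, that the glued space is an orientable surface rather than merely a complex of triangles, and that all marked points land on the boundary so that there are no punctures and exactly $b$ boundary components, each with a single marked point. Once this closing-up is carried out carefully, essentially re-deriving the relevant direction of the correspondence in~\cite{ABCP10}, the remaining counts are routine through Lemmas~\ref{l:Qnet} and~\ref{l:AG}.
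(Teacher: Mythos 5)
Your proposal is correct and follows essentially the same route as the paper: Proposition~\ref{p:AGbt}(a) plus the Assem--Br\"{u}stle--Charbonneau-Jodoin--Plamondon correspondence for the realization statement, and Proposition~\ref{p:AGbt}(b) combined with Proposition~\ref{p:goodmut}(a) for goodness of all mutations. Your argument for the second assertion --- that every bad neighborhood in Proposition~\ref{p:goodmut}(a) has its mutated vertex with exactly one incoming and one outgoing arrow, a local type excluded by the third and fourth configurations of Proposition~\ref{p:AGbt}(b) --- is exactly the paper's argument, made explicit. The one place you diverge is the first assertion: the paper simply invokes \cite[Prop.~2.8]{ABCP10}, which states precisely that a gentle algebra all of whose relations lie in radical-square-zero oriented $3$-cycles is isomorphic to $\cP(Q,W)$ for a triangulation of an unpunctured marked surface; the surface reconstruction you flag as ``the main obstacle'' is therefore available as a citation and need not be re-derived. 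Note also that your reconstruction aims at something stronger than the corollary requires --- a surface with exactly $b$ boundary components, each carrying a single marked point --- whereas the corollary only asserts realizability on \emph{some} unpunctured surface; the identification of the surface type is deferred in the paper to the proof of Theorem~\ref{t:gb}(e), where the Avella-Alaminos--Geiss invariant and the necessity results of Section~\ref{ssec:necessary} are used to pin down $(g,b)$. If you do insist on carrying out the gluing yourself, be aware that the step ``each arc is a side of exactly two triangles'' is not pure degree bookkeeping: at a vertex of type $(2,2)$ lying in only one internal $3$-cycle, gentleness forces an extra zero-relation outside any $3$-cycle, contradicting Proposition~\ref{p:AGbt}(a); an argument of this kind is needed to exclude spurious incidence patterns.
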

\begin{proof}
The first assertion follows from Prop.~\ref{p:AGbt}(\ref{it:AGsurface})
and~\cite[Prop.~2.8]{ABCP10}. The second follows from
Prop.~\ref{p:AGbt}(\ref{it:AGngbr}) together with the characterization
of good mutations in Proposition~\ref{p:goodmut}.
\end{proof}

\begin{proof}[Proof of Theorem~\ref{t:gb}]
{\ }
\begin{enumerate}
\renewcommand{\theenumi}{\alph{enumi}}
\item
Since it is known that conditions $(\delta_1)$ and $(\delta_4)$ hold,
we only need to show condition $(\delta_3)$ which will then imply
$(\delta_2)$. Indeed, let $(Q,W) \in \cQ_{g,b}$. By Lemma~\ref{l:AG},
$\phi(\cP(Q,W)) = b(1,1)+t(0,3)$. Now the claim follows by
part~(\ref{it:AGngbr}) of Proposition~\ref{p:AGbt} together with
Proposition~\ref{p:goodmut}.

\item
Follows from Lemma~\ref{l:Qnet}.

\item
Follows from~\cite{Holm05} and Lemma~\ref{l:AG}.

\item
$\gL$ and $\gL'$ are connected by a sequence of QP mutations which are
good by property $(\delta_3)$ of $\cQ_{g,b}$ and hence induce the
required sequence of algebra mutations. Note that these are also
Brenner-Butler tilts according to Proposition~\ref{p:algmut}.

\item
Let $\gL'$ be derived equivalent to an algebra in $\cT_{g,b}$. By a
result of Schr\"{o}er and Zimmermann~\cite{SchroerZimmermann03}, the
class of gentle algebras is closed under derived equivalence and hence
$\gL'$ is also gentle with the same derived invariant $\phi(\gL') =
b(1,1)+t(0,3)$.

By Corollary~\ref{c:AGsurfacegood}, $\gL' \simeq \cP(Q,W)$ for some
$(Q,W)$ arising from a triangulation of a bordered unpunctured surface
$(S,M)$ and moreover all mutations are good. Since any mutation
$\mu_k(Q,W)$ leads to a derived equivalent Jacobian algebra, applying
again Corollary~\ref{c:AGsurfacegood} we see that all mutations of
$\mu_k(Q,W)$ are good as well. Applying mutations repeatedly we deduce
that the mutation class of $(Q,W)$ satisfies condition $(\delta_2)$,
and hence by the results of Section~\ref{ssec:necessary} $(S,M)$ is
either a disc with $4$ or $5$ marked points or a surface of genus $g'$
with $b'$ boundary components and a marked point on each component.
The first case is impossible since $\phi(KA_n)=(n+1,n-1)$~\cite[\S
7]{AvellaAlaminosGeiss08}. Thus we are in the second case and by
Lemma~\ref{l:AG} we must have $b'=b$ and $g'=g$ so that $\gL' \in
\cT_{g,b}$.

\item
Clear.
\end{enumerate}
\end{proof}

\begin{proof}[Proof of Theorem~\ref{t:SM}]
For the implications in one direction, combine Proposition~\ref{p:SMd2}
and Remark~\ref{rem:An}. The other direction follows from
Theorem~\ref{t:gb}.
\end{proof}

\subsection{The exceptional quivers}
Consider the quiver with potential $(Q_{p,q,r}, W_{p,q,r})$ of
Section~\ref{sec:exceptional}. By computing the Cartan matrix of the
Jacobian algebra $\cP(Q_{2,2,2}, W_{2,2,2})$ we see that its
determinant equals $4$, hence the same is true for any of the algebras
$\cP(Q_{p,q,r}, W_{p,q,r})$ obtained by gluing the linear quivers
$A_{p-1}$, $A_{q-1}$ and $A_{r-1}$.

If at least one of $p,q,r$ is greater than $3$, say $r>3$, then by
performing mutation at the vertex labeled $r-2$ we get the Jacobian
algebra $\cP(Q'_{p,q,r}, W'_{p,q,r})$ with the quiver
\[
\xymatrix@=0.5pc{
&& && && && && {\bullet_1}
\ar[dddll]^{\beta_2} \ar[rr] && {\bullet_2} \ar[rr]
&& {\dots} \ar[rr] && {\bullet_{q-1}} \\
&& && && && {\bullet_a}
\ar[urr]^{\beta_1} \ar[dddrr]^{\gamma_1} \ar[dll]_{\alpha_1} \\
{\bullet_{p-1}} && {\dots} \ar[ll] &&
{\bullet_2} \ar[ll] && {\bullet_1} \ar[ll] \ar[drr]_{\alpha_2} \\
&& && && && {\bullet_b} \ar@/^0.2pc/[uu]^{\eps} \ar@/_0.2pc/[uu]_{\eta} \\
&& && && && && {\bullet_1}
\ar[ull]^{\gamma_2} \ar[rr] && {\dots} \ar[rr]
&& {\bullet_{r-3}} \ar@/^2pc/[rrrr] && {\bullet_{r-2}} \ar[ll]
&& {\bullet_{r-1}} \ar[ll]
}
\]
and potential $W'_{p,q,r} = W_{p,q,r} + \Delta$, where $\Delta$ is the
new $3$-cycle in $Q'_{p,q,r}$. The Cartan matrix of this algebra has
determinant $8$, as can be seen by direct calculation or
invoking~\cite{Ladkani11b}. It follows that $\cP(Q_{p,q,r},W_{p,q,r})$
and $\cP(Q'_{p,q,r}, W'_{p,q,r})$ are not derived equivalent and hence
$\cQ_{p,q,r}$ does not satisfy condition $(\delta_2)$.

We are left with the case where $p,q,r \leq 3$. The mutation classes
$\cQ_{2,2,2}$, $\cQ_{2,2,3}$ and $\cQ_{2,3,3}$ coincide with the
mutation classes of the acyclic quivers $\tilde{D}_4$, $\tilde{D}_5$
and $\tilde{E}_6$, respectively, hence they cannot satisfy condition
$(\delta_2)$ either.

For the class $\cQ_{3,3,3}$ as well as for the mutation class of $(X_6,
W_6)$, one computes the Jacobian algebras in these finite mutation
classes and applies the algorithms developed in~\cite{Ladkani10}.

\section{Quivers}
\label{sec:quivers}

In this section we provide a recipe to produce explicit quivers in each
of the classes $\cQ_{g,b}$ introduced in Section~\ref{sec:surface}.

\subsection{The case $g>0$ and $b=1$}

We draw the fundamental polygon with $4g$ edges labeled
$1,2,1,2,\dots,2g-1,2g,2g-1,2g$ corresponding to a surface of genus
$g$, and put the single hole inside the polygon near one of its
vertices which serves as the marked point (recall that all vertices of
the polygon get identified on the surface). Then there exist
triangulations as shown in Figure~\ref{fig:Tg1}.

\begin{figure}
\begin{align*}
\xymatrix@=1pc{
& {\circ} \ar@{-}[dr]^1 \ar@{-}@/^0.5pc/[dd]^x \ar@{-}@/_0.5pc/[dd]_y \\
{\cdot} \ar@{-}[ur]^2 & & {\cdot} \ar@{-}[dl]^2 \\
& {\cdot} \ar@{-}[ul]^1
}
& &
\xymatrix@=0.3pc{
& && {\circ}
\ar@{-}[drr]^1 \ar@{-}[dddrrr] \ar@{-}[dddddrr] \ar@{-}@/^0.3pc/[dddddd]^x
\ar@{-}[dddlll] \ar@{-}[dddddll] \ar@{-}@/_0.3pc/[dddddd]_y \\
& {\cdot} \ar@{-}[urr]^4 && && {\cdot} \ar@{-}[ddr]^2 \\ \\
{\cdot} \ar@{-}[uur]^3 && && && {\cdot} \ar@{-}[ddl]^1 \\ \\
& {\cdot} \ar@{-}[uul]^4 && && {\cdot} \ar@{-}[dll]^2 \\
& && {\cdot} \ar@{-}[ull]^3
}
& &
\xymatrix@=0.2pc{
& && && {\circ}
\ar@{-}[dddrrrr] \ar@{-}[dddddrrrrr] \ar@{-}[dddddddrrrr]
\ar@{-}[dddddddddrr] \ar@{-}@/^0.2pc/[dddddddddd]^x
\ar@{-}[dddllll] \ar@{-}[dddddlllll] \ar@{-}[dddddddllll]
\ar@{-}[dddddddddll] \ar@{-}@/_0.2pc/[dddddddddd]_y \\
& && {\cdot} \ar@{-}[urr]^6 && && {\cdot} \ar@{-}[ull]_1 \\ \\
& {\cdot} \ar@{-}[uurr]^5 && && && && {\cdot} \ar@{-}[uull]_2 \\ \\
{\cdot} \ar@{-}[uur]^6 & && && && && & {\cdot} \ar@{-}[uul]_1 \\ \\
& {\cdot} \ar@{-}[uul]^5 && && && && {\cdot} \ar@{-}[uur]_2 \\ \\
& && {\cdot} \ar@{-}[uull]^4 && && {\cdot} \ar@{-}[uurr]_3 \\
& && && {\cdot} \ar@{-}[ull]^3 \ar@{-}[urr]_4
}
\end{align*}
\caption{Triangulations of bordered surfaces of genus $g$ with one
hole, denoted by $\circ$, for $g=1,2,3$. Edges having the same label
are identified.}
\label{fig:Tg1}
\end{figure}
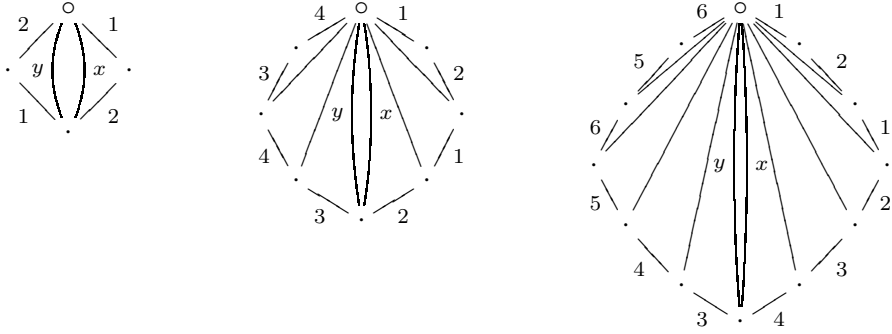

The corresponding quivers are then built by gluing three kinds of
building blocks shown in Figure~\ref{fig:blocks}, as demonstrated in
Figure~\ref{fig:Qg1} for $g \leq 4$. Each block corresponds to a pair
of consecutive labels $2i-1,2i$ of edges in the polygon, and it is
constructed by considering all the triangles adjacent to these edges.

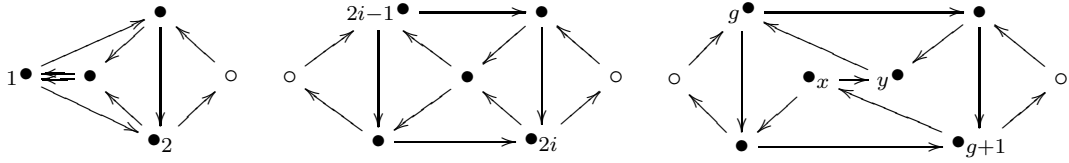
\begin{figure}
\begin{align*}
\xymatrix@=1pc{
& & {\bullet} \ar[dl] \ar[dd] \\
{_1\bullet} \ar[urr] \ar[drr] & {\bullet} \ar@/^0.1pc/[l] \ar@/_0.1pc/[l]
& & {\circ} \ar[ul] \\
& & {\bullet_2} \ar[ul] \ar[ur]
}
& &
\xymatrix@=1pc{
& {_{2i-1}\bullet} \ar[rr] \ar[dd] & & {\bullet} \ar[dl] \ar[dd] \\
{\circ} \ar[ur] & & {\bullet} \ar[ul] \ar[dl] & & {\circ} \ar[ul] \\
& {\bullet} \ar[ul] \ar[rr] & & {\bullet_{2i}} \ar[ul] \ar[ur]
}
& &
\xymatrix@R=0.9pc@C=0.9pc{
& {_g\bullet} \ar[rrr] \ar[dd] & & & {\bullet} \ar[dl] \ar[dd] \\
{\circ} \ar[ur] & & {\bullet_x} \ar[dl] \ar[r] & {_y\bullet} \ar[ull]
& & {\circ} \ar[ul] \\
& {\bullet} \ar[ul] \ar[rrr] & & & {\bullet_{g+1}} \ar[ull] \ar[ur]
}
\end{align*}
\caption{Three building blocks for a quiver in $\cQ_{g,1}$. Gluing
points are marked with $\circ$.} \label{fig:blocks}
\end{figure}

\begin{figure}
\begin{gather*}
\xymatrix@=1pc{
& & {\bullet_x} \ar[dl] \ar[dd] \\
{_2\bullet} \ar[urr] \ar[drr] & {\bullet_1} \ar@/^0.1pc/[l] \ar@/_0.1pc/[l] \\
& & {\bullet_y} \ar[ul]
}
\\ \\
\xymatrix@=1pc{
& & {\bullet} \ar[dl] \ar[dd] & & & {\bullet} \ar[dl] \ar[drr] \\
{_1\bullet} \ar[urr] \ar[drr] & {\bullet} \ar@/^0.1pc/[l] \ar@/_0.1pc/[l]
& & {\bullet_x} \ar[ul] \ar[r] & {_y\bullet} \ar[dr]
& & {\bullet} \ar[ul] \ar[dl] & {\bullet_4} \ar@/^0.1pc/[l] \ar@/_0.1pc/[l] \\
& & {\bullet_2} \ar[ul] \ar[ur] & & & {_3\bullet} \ar[uu] \ar[urr]
}
\\ \\
\xymatrix@=1pc{
& & {\bullet} \ar[dl] \ar[dd]
& & {_3\bullet} \ar[rrr] \ar[dd] & & & {\bullet} \ar[dl] \ar[dd]
& & {\bullet} \ar[dl] \ar[drr] \\
{_1\bullet} \ar[urr] \ar[drr] & {\bullet} \ar@/^0.1pc/[l] \ar@/_0.1pc/[l]
& & {\bullet} \ar[ul] \ar[ur]
& & {\bullet_x} \ar[dl] \ar[r] & {_y\bullet} \ar[ull]
& & {\bullet} \ar[ul] \ar[dr] & & {\bullet} \ar[ul] \ar[dl]
& {\bullet_6} \ar@/^0.1pc/[l] \ar@/_0.1pc/[l] \\
& & {\bullet_2} \ar[ul] \ar[ur]
& & {\bullet} \ar[ul] \ar[rrr] & & & {\bullet_4} \ar[ull] \ar[ur]
& & {_5\bullet} \ar[uu] \ar[urr]
}
\\ \\
\xymatrix@=0.95pc{
& & {\bullet} \ar[dl] \ar[dd]
& & {_3\bullet} \ar[rr] \ar[dd] & & {\bullet} \ar[dl] \ar[dd]
& & & {_5\bullet} \ar[rr] \ar[dd] & & {\bullet} \ar[dl] \ar[dd]
& & {\bullet} \ar[dl] \ar[drr] \\
{_1\bullet} \ar[urr] \ar[drr] & {\bullet} \ar@/^0.1pc/[l] \ar@/_0.1pc/[l]
& & {\bullet} \ar[ul] \ar[ur] & & {\bullet} \ar[ul] \ar[dl]
& & {\bullet_x} \ar[ul] \ar[r] & {_y\bullet} \ar[ur]
& & {\bullet} \ar[ul] \ar[dl] & & {\bullet} \ar[ul] \ar[dr]
& & {\bullet} \ar[ul] \ar[dl] & {\bullet_8} \ar@/^0.1pc/[l] \ar@/_0.1pc/[l] \\
& & {\bullet_2} \ar[ul] \ar[ur]
& & {\bullet} \ar[ul] \ar[rr] & & {\bullet_4} \ar[ul] \ar[ur]
& & & {\bullet} \ar[ul] \ar[rr] & & {\bullet_6} \ar[ul] \ar[ur]
& & {_7\bullet} \ar[uu] \ar[urr]
}
\end{gather*}
\caption{Quivers in $\cQ_{g,1}$ for $g=1,2,3,4$.}
\label{fig:Qg1}
\end{figure}

The left block corresponds to the initial and terminal pairs of labels
$\{1,2\}$ and $\{2g-1,2g\}$. We have drawn the picture only for one
pair, as it is symmetric (and isomorphic) for the other pair. The
middle one corresponds to all the intermediate pairs $\{2i-1,2i\}$
where $1 < i < g$ and $g \neq 2i-1$, whereas the right one arises from
the middle pair $\{g,g+1\}$ when $g$ is odd and involves the non-inner
triangle containing the arcs $x$ and $y$.

\subsection{The case $g>0$ and $b>1$}
The only triangle in the above triangulations which is not inner is the
``middle'' one
\begin{equation}
\label{e:g1}
\xymatrix{
{_A{\circ}} \ar@{-}@/^1pc/[r]^x \ar@{-}@/_1pc/[r]_y & {\cdot}_A
}
\end{equation}
where the marked point is indicated by the letter $A$ and $\circ$ is
the hole.

When there is more than one hole, we may arrange the other holes inside
this triangle and refine the triangulation to pass through the
additional marked points $B, C, \dots$ as in the following pictures
\begin{align}
\label{e:g23}
\xymatrix{
{_A{\circ}} \ar@{-}@/^0.5pc/[r] \ar@{-}@/_0.5pc/[r]
\ar@{-}@/^1pc/[rr]^x \ar@{-}@/_1pc/[rr]_y &
{_B{\circ}} \ar@{-}@/^0.5pc/[r] \ar@{-}@/_0.5pc/[r] &
{\cdot_A}
}
& &
\xymatrix{
{_A{\circ}} \ar@{-}@/^0.5pc/[r] \ar@{-}@/_0.5pc/[r]
\ar@{-}@/^1pc/[rr] \ar@{-}@/_1pc/[rr]
\ar@{-}@/^1.5pc/[rrr]^x \ar@{-}@/_1.5pc/[rrr]_y &
{_B{\circ}} \ar@{-}@/^0.5pc/[r] \ar@{-}@/_0.5pc/[r] &
{_C{\circ}} \ar@{-}@/^0.5pc/[r] \ar@{-}@/_0.5pc/[r] &
{\cdot_A}
}
\end{align}

A quiver in $\cQ_{g,b}$ is thus obtained from our representative in
$\cQ_{g,1}$ by replacing the single arrow $\xymatrix{{\bullet_x} \ar[r]
& {\bullet_y}}$ corresponding to the picture~\eqref{e:g1} by the quiver
\begin{equation}
\label{e:Qb}
\xymatrix@=1pc{
{_x\bullet} \ar[dr] & & {\bullet} \ar[ll] \ar[dr] & &
{\bullet} \ar[ll] & {\dots} & {\bullet} \ar[dr] & &
{\bullet} \ar[ll] \ar[ddd] \\
& {\bullet} \ar[ur] \ar[d] & & {\bullet} \ar[ur] \ar[d]
& & & & {\bullet} \ar[ur] \ar[d] \\
& {\bullet} \ar[dl] & & {\bullet} \ar[dl] & & & & {\bullet} \ar[dl] \\
{_y\bullet} \ar[rr] & & {\bullet} \ar[rr] \ar[ul] & &
{\bullet} \ar[ul] & {\ldots} & {\bullet} \ar[rr]
& & {\bullet} \ar[ul]
}
\end{equation}
with $2(b-1)$ oriented triangles. The $b$ vertical arrows of this
quiver correspond to the $b$ triangles which are not inner in the
triangulation.

\subsection{The case $g=0$ and $b>1$}

This case is quite similar to the previous one. In fact, by replacing
the copy of the marked point $\cdot_A$ in the pictures~\eqref{e:g1}
and~\eqref{e:g23} by an additional hole we get triangulations of the
sphere with $b$ holes and $b$ marked points, one at each hole, which
are shown below for $b=2,3,4$.
\begin{align*}
\xymatrix{
{_A{\circ}} \ar@{-}@/^1pc/[r]^x \ar@{-}@/_1pc/[r]_y & {_B{\circ}}
}
& &
\xymatrix{
{_A{\circ}} \ar@{-}@/^0.5pc/[r] \ar@{-}@/_0.5pc/[r]
\ar@{-}@/^1pc/[rr]^x \ar@{-}@/_1pc/[rr]_y &
{_B{\circ}} \ar@{-}@/^0.5pc/[r] \ar@{-}@/_0.5pc/[r] &
{_C{\circ}}
}
& &
\xymatrix{
{_A{\circ}} \ar@{-}@/^0.5pc/[r] \ar@{-}@/_0.5pc/[r]
\ar@{-}@/^1pc/[rr] \ar@{-}@/_1pc/[rr]
\ar@{-}@/^1.5pc/[rrr]^x \ar@{-}@/_1.5pc/[rrr]_y &
{_B{\circ}} \ar@{-}@/^0.5pc/[r] \ar@{-}@/_0.5pc/[r] &
{_C{\circ}} \ar@{-}@/^0.5pc/[r] \ar@{-}@/_0.5pc/[r] &
{_D{\circ}}
}
\end{align*}

The corresponding quiver in $\cQ_{0,b}$ is obtained from the one
in~\eqref{e:Qb} with $2(b-2)$ oriented triangles by adding an arrow
from $x$ to $y$ coming from the ``external'' triangle consisting of the
edges $x$, $y$ and the boundary of the rightmost hole, see for example
the top row of Figure~\ref{fig:Qgb}.

\bibliographystyle{amsplain}
\bibliography{surface}

\end{document}